\newtheorem{theo}{Theorem}
\newtheorem{cor}{Corollary}
\newtheorem{lem}{Lemma}
\newtheorem{prop}{Proposition}
\theoremstyle{definition}
\newtheorem{defn}{Definition}
\theoremstyle{remark}
\newtheorem{rem}{\bf Remark\/}
\newtheorem{exple}{\bf Example\/}
\numberwithin{equation}{section}
\def\C{{\mathbb{C}}}
\def\N{{\mathbb{N}}}
\def\1{{\mathchoice {\rm 1\mskip-4mu l} {\rm 1\mskip-4mu l}{\rm 1\mskip-4.5mu l} {\rm 1\mskip-5mu l}}}
\newcommand{\ds}{\displaystyle}
\newcommand{\w}{\wedge}
\title[On the tangent cones to psh currents]{On the tangent cones to plurisubharmonic currents}
\author[N. Ghiloufi]{Noureddine Ghiloufi}
\email{noureddine.ghiloufi@fsg.rnu.tn}
\author[K. Dabbek]{Khalifa Dabbek}
\email{khalifa.dabbek@fsg.rnu.tn}
\address{Department of Mathematics\\ Faculty of sciences of Gab\`es \\ University of Gab\`es \\ 6072 Gab\`es Tunisia.}
\subjclass[2000]{32U25; 32U40; 32U05}
\keywords{Lelong number, plurisubharmonic current, plurisubharmonic function.}
\begin{document}
\maketitle

\begin{abstract}
    In this paper, we study the existence of the tangent cone to a positive plurisubharmonic or plurisuperharmonic current with a suitable condition. Some Estimates of the growth of the Lelong functions associated to the current  and to its $dd^c$ are given to ensure the existence of the blow-up of this current. A second proof for the existence of the tangent cone is derived from these estimates.\\

    \textbf{Sur les cônes tangents au  courants plurisousharmoniques.}\\
    \textsc{R\'esum\'e.} Dans cet article, nous \'etudions l'existence du c\^one tangent \`a un courant positif plurisousharmonique ou plurisurharmonique avec une condition convenable. Des estimations de croissance des fonctions de Lelong associ\'ees au courant et \`a son $dd^c$ sont donn\'ees pour assurer l'existence du rel\`evement de ce courant. Une  deuxi\`eme preuve de l'existence du c\^one tangent se d\'eduit de ces estimations.
\end{abstract}

\section{Introduction}
    Let $T$ be a positive current of bidimension $(p,p)$ on a neighborhood $\Omega$ of 0 in $\C^n$, $0<p<n$, and $h_a$ be the complex dilatation on $\C^n$ ($h_a(z)=az$)  with $a\in\C^*$. In this paper we study the existence of the weak limit of the family of currents $(h_a^\star T)_a$ when $|a|$ tends to 0. A such limit is called tangent cone to $T$. The case of analytic sets was studied by Thie in 1967 and then by King in 1971. Thus they prove that the tangent cone to the current $[A]$ (current of integration over the analytic set $A$) is given by the current of integration over the geometric tangent cone to the analytic set $A$. However, this statement is not true in  case of positive closed currents and a counterexample was given by Kiselman where he constructed a psh function $u$ such that the current $dd^cu$ doesn't have a tangent cone. For this reason, to ensure the existence of the tangent cone we need some conditions. For closed positive currents, Blel, Demailly and Mouzali gave two independent conditions, where each one ensures the existence of the tangent cone. In this paper, we show that the second condition, condition  $(b)$ in \cite{Bl-De-Mo}, is even sufficient in the case of positive plurisubharmonic ($dd^cT\geq0$) or plurisuperharmonic ($dd^cT\leq0$) currents. Precisely we have\\

    \begin{theo} \label{th1} \textbf{(Main result)} Let $T$ be a positive plurisubharmonic (resp. pluri\-superharmonic) current of bidimension $(p,p)$ on $\Omega$, $0<p<n$. Then the tangent cone to $T$ at 0 exists if, for $r_0>0$, we have
    $$\int_0^{r_0}\frac{\nu_T(r)-\nu_T(0)}r dr <+\infty$$
    resp.
    $$\int_0^{r_0}\frac{\nu_{dd^cT}(t)}t dt >-\infty\quad \hbox{and}\quad \int_0^{r_0}\frac{|\nu_T(r)-\nu_T(0)|}r dr <+\infty.$$
    \end{theo}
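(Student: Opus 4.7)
My plan is to adapt the Cauchy--net argument of Blel--Demailly--Mouzali from the closed case to the plurisubharmonic setting. Writing $T_a = h_a^\star T$, the tangent cone is by definition the weak limit of $T_a$ as $|a|\to 0$, and by weak compactness for families of positive currents with locally bounded mass it suffices to show that $(T_a)$ is a Cauchy net in $\mathcal{D}'(\Omega)$.

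The first step is to control the mass of $T_a$ and to derive a Lelong--Jensen type representation for $\nu_T$. Using the scaling $\nu_{T_a}(R) = \nu_T(|a|R)$, the mass of $T_a$ on any fixed compact set is governed by $\nu_T$ near $0$. I would establish an identity of the form
$$\nu_T(r_2) - \nu_T(r_1) = \int_{r_1}^{r_2} \frac{dt}{t^{2p+1}} \int_{B(0,t)} dd^c T \w \beta^{p-1}$$
(up to a dimensional constant), by applying Stokes on the spherical shell between radii $r_1$ and $r_2$. In the psh case this forces $\nu_T$ to be non-decreasing, so that $\nu_T(0) = \lim_{r\to 0^+}\nu_T(r)$ exists and the required mass bound follows; in the pluri-superharmonic case the hypothesis $\int_0^{r_0}\nu_{dd^cT}(t)/t\,dt > -\infty$ supplies the analogous control on the variation of $\nu_T$.

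The central step is the Cauchy estimate. For a smooth test $(p,p)$-form $\phi$ supported in a compact $K\subset\Omega$ and $0<|b|<|a|$ sufficiently small, I would write
$$\langle h_a^\star T - h_b^\star T, \phi\rangle = \int_{|b|}^{|a|} \frac{d}{dt}\langle h_t^\star T, \phi\rangle\,dt,$$
compute the derivative via the radial Lie derivative, and apply Stokes on spheres after a convolution regularisation. The integrand then splits into a contribution bounded by $(\nu_T(t)-\nu_T(0))/t$ and one bounded by $|\nu_{dd^cT}(t)|/t$. In the psh case the positivity $dd^c T\geq 0$ allows the second contribution to be absorbed into the first by a sign argument on the boundary term, so only the integrability of $(\nu_T(r)-\nu_T(0))/r$ is required. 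In the pluri-superharmonic case no such absorption is possible, so both integrability hypotheses enter, and moreover $\nu_T$ is no longer monotone, which forces the absolute value in the second condition.

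The main technical obstacle will be making the differentiation step rigorous and isolating the two contributions with the correct sign structure. For closed $T$ this is the classical spherical-Stokes argument; here the non-closedness produces an extra term on every shell involving $dd^cT$, and one must verify that in the psh case positivity really lets this term be absorbed rather than added. This is handled by approximating $T$ by $T\ast\rho_\varepsilon$, running the Stokes computation in the smooth category, and then passing monotonically to the limit using the positivity of $T\w\beta^{p}$ and of $dd^cT\w\beta^{p-1}$. Once this step is carried out, the Cauchy property follows at once from the hypothesised integrability of the Lelong functions, and uniqueness of the weak limit yields the existence of the tangent cone.
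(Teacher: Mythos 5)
Your proposal founders at its central step: the claimed bound on the radial derivative. Writing $\langle h_a^\star T-h_b^\star T,\phi\rangle=\int_{|b|}^{|a|}\frac{d}{dt}\langle h_t^\star T,\phi\rangle\,dt$ requires you to control $\frac{d}{dt}\langle h_t^\star T,\phi\rangle$, i.e.\ \emph{first-order} derivatives of the coefficients $T_{I,J}$ of a positive current. These are not measures and are not dominated by the trace measure $T\w\beta^p$ together with $dd^cT\w\beta^{p-1}$; only specific second-order combinations (namely $dd^c$) are. If instead you integrate by parts to put the derivative on $\phi$, you get a bound of order $\nu_T(Ct)/t$, which is \emph{not} integrable near $0$: the cancellation that would replace $\nu_T(t)$ by $\nu_T(t)-\nu_T(0)$ is precisely the content of the theorem and is not accessible this way. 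The Lelong--Jensen formula controls only the single positive quantity $\int_{B(r_1,r_2)}T\w\alpha^p$ on a shell, not the full angular variation of $T$ there, so no ``sign argument'' or regularisation will produce the splitting you assert. (A smaller issue: a Cauchy estimate in the real parameter $t$ would at best give convergence along $a\in]0,+\infty[$, and upgrading to $a\in\C^*$ needs a separate argument.)

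The proof in the paper (following Blel--Demailly--Mouzali) is built exactly to circumvent this missing first-order bound, and that machinery is absent from your sketch. After the mass bound reduces matters to a neighborhood of a point $z^0\neq0$, one passes to projective coordinates $w_j=z_j/z_n$, $w_n=z_n$, so that $h_a$ acts only on $w_n$; Demailly's inequality (\ref{eq2.6}) reduces all coefficients $T_{I,J}^a$ to the diagonal ones; the key Lemma \ref{lem4} shows $\int_U\sum_{I\ni n}T_{I,I}^a\lesssim\gamma_T(|a|)+\gamma_{dd^cT}(|a|)$ by comparing with $\int h_a^\star T\w\alpha^p$ via Lelong--Jensen, so the coefficients with $n\in I$ or $n\in J$ die in mass. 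For the remaining coefficients one studies $f_{I,J}(a)=\int_U T_{I,J}(w',aw_n)\varphi\,d\tau$ as a function of the \emph{complex} variable $a$ and estimates its Laplacian $\partial^2f_{I,J}/\partial a\partial\overline a$ --- a second-order quantity in which $(dd^cT)_{I\cup\{n\},J\cup\{n\}}$ genuinely appears --- and then invokes \cite[lemme 3.6]{Bl-De-Mo}: a bounded function on the punctured disc with $|\Delta f|\leq\psi(|a|)$ and $\int_0^{r_0}r|\log r|\psi(r)\,dr<+\infty$ extends continuously to $0$. The logarithmic weight comes from the Green function of the plane Laplacian and is the potential-theoretic substitute for the first-order control you are assuming. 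Without an argument of this second-order type (or the equivalent blow-up/restriction argument of Section 3), the Cauchy-net strategy as you describe it cannot be completed.
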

    We start our paper by giving some preliminary results. Next, we give a direct proof of the main result. Finely, we study the problem of restriction of positive currents along analytic sets and we conclude a second (partial) proof of the main result.
    \subsection{Lelong numbers}
        Let now recall some notations and preliminary results useful in the following.\\
        For every $r>0,\ r_2>r_1>0$ and $z_0\in\C^n$, we set
        $$\begin{array}{l}
            \ds B(z_0,r):=\left\{z\in\C^n;\ |z-z_0|<r\right\}\\
            B(z_0,r_1,r_2):=\{z\in\C^n;\ r_1\leq|z-z_0|<r_2\}=B(z_0,r_2)\smallsetminus B(z_0,r_1) \\
            \ds \beta_{z_0}:=dd^c|z-z_0|^2=\frac{i}{2\pi}\partial\overline{\partial}|z-z_0|^2,\quad \alpha_{z_0}:=dd^c\log|z-z_0|^2.
        \end{array}$$
        When $z_0=0$, we omit $z_0$ in previous notations and we use only $B(r)$, $B(r_1, r_2)$, $\beta$ and $\alpha$ instead of $B(0,r)$, $B(0,r_1,r_2)$, $\beta_0$ and $\alpha_0$ respectively.\\

        Let $T$ be a positive plurisubharmonic or plurisuperharmonic current of bidimension $(p,p)$ on $\Omega$ and $z_0\in\Omega$. Let $R>0$ such that $B(z_0,R)\subset\subset\Omega$. For all $0<r<R$, we set  $\nu_T(z_0,r)=\frac1{r^{2p}}\int_{B(z_0,r)}T\w\beta_{z_0}^p$ the projective mass of $T$. The well-known following lemma will be used frequently in the hole of this paper.
        \begin{lem}\label{lem1}(Lelong-Jensen formula)
            Let $S$ be a positive plurisubharmonic or plurisuperharmonic current of bidimension $(p,p)$ on $\Omega$ and $z_0\in\Omega$. Then, for all $0<r_1<r_2<R$,
            \begin{equation}\label{eq1.1}
                \begin{array}{lcl}
                    \nu_S(z_0,r_2)-\nu_S(z_0,r_1) & = &\ds \frac1{r_2^{2p}}\int_{B(z_0,r_2)} S\w\beta_{z_0}^p -\frac1{r_1^{2p}} \int_{B(z_0,r_1)}S\w\beta_{z_0}^p\\
                    & = & \ds \int_{r_1}^{r_2}\left(\frac1{t^{2p}} -\frac1{r_2^{2p}} \right)tdt\int_{B(z_0,t)}dd^cS\w\beta_{z_0}^{p-1}\\
                    & &\hfill \ds +\left(\frac1{r_1^{2p}}-\frac1{r_2^{2p}}\right)\int_0^{r_1}tdt\int_{B(z_0,t)} dd^cS\w \beta_{z_0}^{p-1}\\
                    & &\hfill \ds+\int_{B(z_0,r_1,r_2)}S\w\alpha_{z_0}^p.
                \end{array}
            \end{equation}
        \end{lem}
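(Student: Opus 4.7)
The strategy is a double Stokes integration by parts exploiting the identity $\beta_{z_0}^p = dd^c(\rho\,\beta_{z_0}^{p-1})$, where $\rho(z):=|z-z_0|^2$, which holds since $\beta_{z_0}$ is $d$- and $d^c$-closed. I would first reduce to smooth $S$ by convolution $S_\varepsilon := S\ast\chi_\varepsilon$: every integral in \eqref{eq1.1} depends continuously on $S$ in the weak sense, thanks to the locally finite mass of $S$ and of the sign-definite current $dd^cS$ paired against the smooth forms $\beta_{z_0}^{p-1},\beta_{z_0}^p,\alpha_{z_0}^p$ on any relatively compact annulus, so it suffices to prove the identity for smooth $S$ and pass to the limit.

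Assuming $S$ smooth, Stokes' formula applied to the ball $B(z_0,r)$ gives
$$I(r):=\int_{B(z_0,r)}S\w\beta_{z_0}^p \;=\; \int_{B(z_0,r)}\rho\,dd^cS\w\beta_{z_0}^{p-1} \;+\; \int_{\partial B(z_0,r)}\bigl[S\w d^c\rho-\rho\,d^cS\bigr]\w\beta_{z_0}^{p-1},$$
and a Fubini argument using $\rho(z)=2\int_0^{|z-z_0|}s\,ds$ converts the volume term into
$$\int_{B(z_0,r)}\rho\,dd^cS\w\beta_{z_0}^{p-1} \;=\; r^2\sigma(r)-2\!\int_0^{r}\!t\,\sigma(t)\,dt,\qquad \sigma(t):=\!\int_{B(z_0,t)}\!dd^cS\w\beta_{z_0}^{p-1}.$$
Inserting these into $\nu_S(z_0,r_2)-\nu_S(z_0,r_1)=r_2^{-2p}I(r_2)-r_1^{-2p}I(r_1)$ and rearranging the $\sigma$-integrals according to the ranges $[0,r_1]$ and $[r_1,r_2]$ produces the first two summands of \eqref{eq1.1} with their weights $(t^{-2p}-r_2^{-2p})$ and $(r_1^{-2p}-r_2^{-2p})$.

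The residual surface contributions at $\partial B(z_0,r_1)$ and $\partial B(z_0,r_2)$ must then assemble into the third summand $\int_{B(z_0,r_1,r_2)}S\w\alpha_{z_0}^p$. I would use the pointwise identity
$$\alpha_{z_0}^p \;=\; \rho^{-p}\beta_{z_0}^p \;-\; p\,\rho^{-p-1}\,d\rho\w d^c\rho\w\beta_{z_0}^{p-1}\qquad\text{on}\ \{\rho>0\},$$
obtained by expanding $\alpha_{z_0}=\rho^{-1}\beta_{z_0}-\rho^{-2}d\rho\w d^c\rho$ and using $d\rho\w d\rho=d^c\rho\w d^c\rho=0$, together with a further Stokes on the annulus via $\alpha_{z_0}^p=dd^c(\log\rho\cdot\alpha_{z_0}^{p-1})$ (which is valid because $\alpha_{z_0}$ is closed on $\{\rho>0\}$); the resulting boundary pieces match term by term those produced by the first Stokes.

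The main obstacle will be the sign and coefficient bookkeeping across these successive Stokes applications, and in particular verifying that after division by the appropriate $r_i^{2p}$ the boundary pieces reassemble with coefficient $+1$ in front of $\int_{B(z_0,r_1,r_2)} S\w\alpha_{z_0}^p$ with no residual surface contribution and that the Fubini rearrangement delivers exactly the two displayed weight kernels of \eqref{eq1.1}. The singularity of $\alpha_{z_0}$ at $z_0$ does not create a difficulty here, since the annulus $B(z_0,r_1,r_2)$ stays away from $z_0$.
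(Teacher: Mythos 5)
The paper never proves Lemma \ref{lem1}: it is invoked as a well-known identity (it originates in the literature on Lelong numbers of plurisubharmonic currents, e.g.\ Alessandrini--Bassanelli, and is used again in \cite{Gh}), so there is no proof in the text to compare yours against; I can only judge the sketch on its own terms.

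Your strategy (regularize, two Stokes integrations, Fubini in the radial variable) is the standard and correct one, and the individual identities you display are right: the Green-type formula for $I(r)$, the conversion $\int_{B(z_0,r)}\rho\,dd^cS\wedge\beta_{z_0}^{p-1}=r^2\sigma(r)-2\int_0^r t\sigma(t)\,dt$, and the expansion of $\alpha_{z_0}^p$. The problem is the assembly, which you describe inaccurately in a way that hides a missing ingredient. Since $\rho\equiv r^2$ on $\partial B(z_0,r)$, the boundary term $-\int_{\partial B(z_0,r)}\rho\,d^cS\wedge\beta_{z_0}^{p-1}=-r^2\sigma(r)$ cancels the $r^2\sigma(r)$ produced by Fubini, so the first Stokes leaves only $I(r)=\lambda(r)-2\int_0^r t\sigma(t)\,dt$ with $\lambda(r)=\int_{\partial B(z_0,r)}S\wedge d^c\rho\wedge\beta_{z_0}^{p-1}$. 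Taking $r_2^{-2p}I(r_2)-r_1^{-2p}I(r_1)$ therefore yields only terms of the form $r_i^{-2p}\int_0^{r_i}t\sigma(t)\,dt$ with \emph{constant} weights; the piece $\int_{r_1}^{r_2}t^{1-2p}\sigma(t)\,dt$ of the first kernel $\bigl(t^{-2p}-r_2^{-2p}\bigr)$ cannot arise from any rearrangement of these. It must come from the second Stokes on the annulus, which, besides $\int_{B(z_0,r_1,r_2)}S\wedge\alpha_{z_0}^p$ and sphere terms, produces the genuinely new volume term $\int_{B(z_0,r_1,r_2)}\log\rho\;dd^cS\wedge\alpha_{z_0}^{p-1}$ together with $\log(r_i^2)\int_{\partial B(z_0,r_i)}d^cS\wedge\alpha_{z_0}^{p-1}$. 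For $p\geq2$ these involve $dd^cS\wedge\alpha_{z_0}^{p-1}$ rather than $dd^cS\wedge\beta_{z_0}^{p-1}$ and do not match anything from the first Stokes term by term; to express them through $\sigma(t)=\int_{B(z_0,t)}dd^cS\wedge\beta_{z_0}^{p-1}$ you need the classical Lelong--Jensen formula for the \emph{closed} current $dd^cS$, namely $\int_{B(z_0,s,t)}dd^cS\wedge\alpha_{z_0}^{p-1}=t^{-2(p-1)}\sigma(t)-s^{-2(p-1)}\sigma(s)$, followed by one more radial Fubini (equivalently, an induction on $p$). That extra step is the real content of the lemma for $p\geq2$ and is absent from your sketch, which as written is complete only for $p=1$. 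The regularization step is acceptable, but note that $\sigma_\varepsilon(t)\to\sigma(t)$ only for $t$ outside a countable set; this suffices because $\sigma$ enters under a $dt$-integral and both sides are one-sidedly continuous in $r_1,r_2$.
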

        According to Lemma \ref{lem1}, if $T$ is  positive plurisubharmonic then $\nu_T(z_0,.)$ is a non-negative increasing function on $]0,R[$, so  the Lelong number $\nu_T(z_0):=\lim_{r\to0^+} \nu_T(z_0,r)$ of $T$ at $z_0$ exists.\\
        For positive plurisuperharmonic currents, the existence of Lelong numbers was treated  by the first author and he proved that Lelong numbers do not depend on the system of coordinates. We cite the main result of \cite{Gh}.
        \begin{theo}\label{th2}
            Let $T$ be a positive plurisuperharmonic current of  bidimension $(p,p)$ on $\Omega$, $0<p<n$, and $z_0\in\Omega$. We assume that $T$  satisfies condition $(C)_{z_0}$ given by:
            $$(C)_{z_0}:\qquad\int_0^{r_0}\frac{\nu_{dd^cT}(z_0,t)}tdt>-\infty$$ for some $0<r_0\leq R$. Then, the Lelong number $\nu_T(z_0)$ of $T$ at $z_0$ exists.
        \end{theo}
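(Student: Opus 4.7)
The plan is to control $\nu_T(z_0,r_0)-\nu_T(z_0,r)$ via the Lelong--Jensen formula of Lemma \ref{lem1} applied with $S=T$, $r_1=r$ and $r_2=r_0$, and to prove that each of the three resulting summands admits a finite limit as $r\to 0^+$. Since $T$ is plurisuperharmonic, $\nu_{dd^cT}(z_0,t)\le 0$; substituting $\int_{B(z_0,t)}dd^cT\w\beta_{z_0}^{p-1}=t^{2p-2}\nu_{dd^cT}(z_0,t)$ in (\ref{eq1.1}) I would rewrite the identity as
$$\nu_T(z_0,r_0)-\nu_T(z_0,r)=I_1(r)+I_2(r)+I_3(r),$$
where
\begin{align*}
I_1(r)&=\int_r^{r_0}\Big(\tfrac1t-\tfrac{t^{2p-1}}{r_0^{2p}}\Big)\nu_{dd^cT}(z_0,t)\,dt,\\
I_2(r)&=\Big(\tfrac1{r^{2p}}-\tfrac1{r_0^{2p}}\Big)\int_0^r t^{2p-1}\nu_{dd^cT}(z_0,t)\,dt,\\
I_3(r)&=\int_{B(z_0,r,r_0)}T\w\alpha_{z_0}^p\ \ge\ 0.
\end{align*}

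Convergence of $I_1$ and $I_2$ will come from $(C)_{z_0}$, which states precisely that $\nu_{dd^cT}(z_0,\cdot)/t$ belongs to $L^1(0,r_0)$. For $I_1$, the factor $t^{2p-1}/r_0^{2p}$ is bounded on $(0,r_0)$, and since $|\nu_{dd^cT}(z_0,t)|\le r_0\cdot|\nu_{dd^cT}(z_0,t)|/t$ the integrand is dominated by an $L^1$ function, so $I_1(r)$ has a finite limit as $r\to 0^+$. For $I_2$, I would write $t^{2p-1}\nu_{dd^cT}(z_0,t)=t^{2p}\cdot\nu_{dd^cT}(z_0,t)/t$ and estimate
$$\Big|\tfrac1{r^{2p}}\int_0^r t^{2p-1}\nu_{dd^cT}(z_0,t)\,dt\Big|\le \int_0^r\frac{|\nu_{dd^cT}(z_0,t)|}{t}\,dt\longrightarrow 0$$
by absolute continuity of the integral; the piece multiplied by $1/r_0^{2p}$ is even smaller, so $I_2(r)\to 0$.

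The main obstacle is $I_3$, because $T\w\alpha_{z_0}^p$ is a priori only a positive measure on the punctured ball whose total mass need not be finite. The key observation is that as $r$ decreases the domain $B(z_0,r,r_0)$ expands, so $r\mapsto I_3(r)$ is monotone non-increasing and hence $I_3(r)$ grows as $r\searrow 0$. On the other hand, positivity of $\nu_T(z_0,r)$ combined with the identity yields
$$0\le I_3(r)\le \nu_T(z_0,r_0)-I_1(r)-I_2(r),$$
whose right-hand side is bounded as $r\searrow 0$ by the previous paragraph. Therefore $I_3(r)$ is monotone and bounded, hence converges. Adding the three limits makes $\nu_T(z_0,r_0)-\nu_T(z_0,r)$ converge, which is exactly the existence of $\nu_T(z_0)=\lim_{r\to 0^+}\nu_T(z_0,r)$.
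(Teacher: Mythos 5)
Your proof is correct and follows essentially the same route as the paper: both rest on the Lelong--Jensen formula of Lemma \ref{lem1}, the positivity and monotonicity of the mass $\int_{B(z_0,r,r_0)}T\w\alpha_{z_0}^p$, and condition $(C)_{z_0}$ to control the $dd^cT$ terms. The paper merely packages your $I_1(r)+I_2(r)$ into the correction term of the auxiliary function $\Lambda_{z_0}(r)=\nu_T(z_0,r)+\int_0^r\bigl(\frac{t^{2p}}{r^{2p}}-1\bigr)\frac{\nu_{dd^cT}(z_0,t)}{t}dt$, whose monotonicity is exactly your monotone-and-bounded argument for $I_3$.
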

        \begin{proof}
            For every $0<r<R$, we set
            $$\Lambda_{z_0}(r)=\nu_T(z_0,r)+\int_0^r\left(\frac{t^{2p}}{r^{2p}}-1\right)\frac{\nu_{dd^cT}(z_0,t)}{t}dt.$$
            Thanks to condition $(C)_{z_0}$ and using the fact that $\nu_{dd^cT}(z_0,.)$ is non-positive on $]0,R[$, one can deduce that $\Lambda_{z_0}$ is well defined and non-negative on $]0,R[$.\\
            For $0<r_1<r_2<R$, Lemma \ref{lem1} gives
            $$\begin{array}{lcl}
                    \Lambda_{z_0}(r_2)-\Lambda_{z_0}(r_1) & = &\ds\nu_T(z_0,r_2)- \nu_T(z_0,r_1)+ \frac1{r_2^{2p}}\int_0^{r_2}t^{2p-1} \nu_{dd^cT}(z_0,t)dt \\
                    & &\hfill \ds -\frac1{r_1^{2p}}\int_0^{r_1}t^{2p-1} \nu_{dd^cT}(z_0,t)dt -\int_{r_1}^{r_2}\frac{\nu_{dd^cT}(z_0,t)}{t}dt\\
                    &=&\ds \int_{B(z_0,r_1,r_2)}T\w\alpha_{z_0}^p\geq 0.
                \end{array}$$
            Therefore, $\Lambda_{z_0}$ is a non-negative increasing function on $]0,R[$, and this implies the existence of the  limit $\varrho:=\lim_{r\to0^+} \Lambda_{z_0}(r)$. The hypothesis of integrability of $\nu_{dd^cT}(z_0,t)/t$ and the fact that $(t^p/r^p-1)$ is uniformly  bounded give $$\ds\lim_{r\to0^+}\int_0^r\left(\frac{t^{2p}}{r^{2p}}-1\right)\frac{\nu_{dd^cT}(z_0,t)}{t}dt=0.$$
            Hence, $\varrho=\lim_{r\to0^+}\Lambda_{z_0}(r)=\lim_{r\to0^+}\nu_T(z_0,r)=\nu_T(z_0)$.
        \end{proof}
        The following example proves that Condition $(C)$  is not necessary in Theorem \ref{th2} for the existence of Lelong number.
        \begin{exple}\label{exple1}
            Let $T_0=du\w d^cu$ where $u(z)=\log|z|^2$. Then $T_0$ is a positive current of bidimension $(1,1)$ on $\C^2$. Furthermore one has $dd^cT_0=-(dd^cu)^2=-\delta_0$ (Dirac) is negative on $\C^2$ and $\nu_{dd^cT_0}(0)=-1$, so Condition $(C)_0$ is not satisfied. In the other hand, a simple computation shows that
            $$\nu_{T_0}(r)=\frac1{4\pi^2 r^2}\int_{|z|<r}\frac1{|z|^2}idz_1\w d\overline{z}_1\w idz_2\w d\overline{z}_2=\frac14.$$
        \end{exple}
        An open problem arises from this part which is to study  the set $\mathscr E_\infty(T)$ of points $z$ in $\Omega$ for which the Lelong number of $T$ at $z$ doesn't exist. If $T$ is positive plurisubharmonic  then $\mathscr E_\infty(T)$ is empty, but if $T$ is positive plurisuperharmonic then $\mathscr E_\infty(T)$ can be non-empty, for example $\mathscr E_\infty(T_1)=\{0\}$ where $T_1=-\log|z_1|^2[z_2=0]$ on $\C^2$.\\
        We remark that if $T$ is positive plurisuperharmonic, then $\mathscr E_\infty(T)\subset \mathscr F_\infty(T)$ where
        $$\begin{array}{lcl}
            \mathscr F_\infty(T)& := &\ds \left\{z\in\Omega;\ \frac{\nu_{dd^cT}(z,t)}t\not\in L^1(\vartheta(0))\right\} \\
            & = & \ds \left\{z\in \Omega;\ \nu_{dd^cT}(z)<0\right\}\cup\left\{z\in \mathscr F_\infty(T);\ \nu_{dd^cT}(z)=0\right\}\\
            & =: & \mathscr F_\infty^1(T)\cup \mathscr F_\infty^2(T).
          \end{array}$$
        The subset $\mathscr F_\infty^1(T)$ is  pluripolar in $\Omega$. Indeed, thanks to Siu's theorem,
        $$\mathscr F_\infty^1(T)=\bigcup_{j\in\N^*}\left\{z\in \Omega;\ \nu_{dd^cT}(z)\leq-\frac1j\right\}$$ is a countable union of analytic sets, because $dd^cT$ is a negative closed current. For the second subset $\mathscr F_\infty^2(T)$, we conjecture that it is also pluripolar.

    \subsection{A Structure theorem}
        In this part, we study a geometric structure of the support of a positive plurisuperharmonic current in $\Omega$. All results given are available in any complex manifold of dimension $n$.\\
        Our aim here is to prove the following theorem:
        \begin{theo}\label{th3}
            Let $T$ be a positive plurisuperharmonic current of  bidimension $(p,p)$ on an open set $\Omega$ of $\C^n$ ($1\leq p\leq n-1$) such that the function $t\mapsto \frac{\nu_{dd^cT}(z,t)}t$ is locally uniformly integrable in neighborhood of points of $X=Supp(T)$. Assume that there exists a real number $\delta>0$ such that the level-set $E_\delta:=\{z\in\Omega;\ \nu_T(z)\geq \delta\}$ is dense in $X$. Then $X$ is a complex subvariety of pure dimension $p$ of $\Omega$ and there exists a weakly plurisubharmonic negative function $\varphi$ on $X$ such that $T=-\varphi[X]$.
        \end{theo}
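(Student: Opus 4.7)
The strategy is to first show that the Lelong number $\nu_T$ is well-defined and upper semi-continuous on a neighborhood of $X$, then to invoke a Siu-type theorem to conclude that the super-level set $E_\delta$ is analytic of pure dimension $p$ (and therefore coincides with $X$ by density), and finally to extract the factorization $T=-\varphi[X]$ from the Federer--Demailly support theorem for positive currents on analytic sets.

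For the first step, the local uniform integrability of $t\mapsto \nu_{dd^cT}(z,t)/t$ on a neighborhood of $X$ means that condition $(C)_z$ from Theorem \ref{th2} holds uniformly in $z$, so $\nu_T(z)$ exists at every $z$ near $X$. The plan is to use the auxiliary function
$$\Lambda_z(r)=\nu_T(z,r)+\int_0^r\left(\frac{t^{2p}}{r^{2p}}-1\right)\frac{\nu_{dd^cT}(z,t)}{t}\,dt$$
built in the proof of Theorem \ref{th2}: it is non-negative, increasing in $r$, and satisfies $\nu_T(z)=\inf_{r>0}\Lambda_z(r)$. For each fixed $r$ the map $z\mapsto \nu_T(z,r)$ is upper semi-continuous by a standard Fatou argument over balls, and the integral correction is continuous in $z$ under the uniform integrability hypothesis, so $\Lambda_{\cdot}(r)$ is u.s.c.\ in $z$; taking the infimum over $r$ yields upper semi-continuity of $\nu_T$. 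Since $\nu_T$ vanishes off $X=\mathrm{Supp}(T)$ we have $E_\delta\subset X$; conversely the closedness of $E_\delta$ (by u.s.c.)\ combined with its density in $X$ gives $X\subset \overline{E_\delta}=E_\delta$. Hence $X=E_\delta$.

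The central step is to upgrade this equality to the statement that $X$ is an analytic subvariety of pure dimension $p$. For positive closed currents this is precisely Siu's analyticity theorem for the super-level sets $\{\nu_T\geq \delta\}$; here I would follow the Kiselman--Demailly approach, using the monotonicity of $r\mapsto \Lambda_z(r)$ in place of the monotonicity of $r\mapsto\nu_T(z,r)$ available in the closed case, the extra integral term being controlled by the local uniform integrability assumption. This yields that $E_\delta$ is analytic of dimension at most $p$; conversely, a positive $(p,p)$-current cannot have non-trivial mass on a set whose irreducible components have smaller complex dimension, so $X=E_\delta$ must have pure dimension $p$. This is the step I expect to be the main obstacle, since the classical proofs of Siu's theorem rely heavily on $dd^cT=0$ and the adaptation to the plurisuperharmonic setting is not entirely routine.

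Finally, Federer's support theorem (in Demailly's version for positive currents supported on an analytic set of the right dimension) gives $T=f[X]$ on the regular part $X_{\mathrm{reg}}$, with $f\geq 0$ locally integrable. Since $[X]$ is closed on $X_{\mathrm{reg}}$, the inequality $dd^cT\leq 0$ transfers into $dd^cf\leq 0$ weakly on each smooth stratum, so $\varphi:=-f$ is non-positive and weakly plurisubharmonic on $X_{\mathrm{reg}}$. An application of the Grauert--Remmert--Skoda--El Mir extension (permitted because $X_{\mathrm{sing}}$ has codimension at least one in $X$ and $\varphi$ is locally bounded above) extends $\varphi$ to a weakly plurisubharmonic negative function on all of $X$ with $T=-\varphi[X]$, concluding the proof.
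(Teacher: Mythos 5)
Your first step (existence of $\nu_T$ near $X$ via condition $(C)_z$, upper semi-continuity through the monotone function $\Lambda_z(r)$, closedness of $E_\delta$, and the conclusion $X=E_\delta$ by density) matches the paper's argument. The final factorization step is also consistent with what the paper leaves implicit. The problem is your central step.

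You propose to obtain analyticity of $E_\delta$ by adapting Siu's theorem (in its Kiselman--Demailly form) to plurisuperharmonic currents, and you yourself flag this as ``the main obstacle'' whose adaptation is ``not entirely routine.'' That is precisely the gap: no such Siu-type theorem is available here, and the classical proofs use $dd^cT=0$ in an essential way (local potentials of $T$, the minimum principle applied to those potentials), so this step does not go through as sketched. The paper avoids Siu entirely. It uses instead the geometric structure of the support: by Forn{\ae}ss--Sibony and Dinh--Lawrence (Lemma \ref{lem2}), $X=\mathrm{Supp}(T)$ is $p$-pseudoconcave, and by Dinh--Lawrence's analyticity criterion (Lemma \ref{lem3}) a $p$-pseudoconcave set with locally finite $2p$-dimensional Hausdorff measure is a complex subvariety of pure dimension $p$. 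The only input needed beyond $X=E_\delta$ is that $E_\delta$ has locally finite $\mathscr H^{2p}$ measure, which follows immediately from the uniform lower bound $\sigma_T(B(z,t))\geq \delta t^{2p}$ at points of $E_\delta$ together with a covering argument. So the missing idea in your proposal is the pseudoconcavity of the support plus the Hausdorff-measure criterion; with that substitution your outline becomes the paper's proof, and the hard analytic machinery you were worried about is not needed.
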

        A similar result was given by Dinh-Lawrence \cite{Di-La} in the case of positive plurisubharmonic currents.\\
        To prove this theorem we have to recall some results.
        \begin{defn}
            Let $Z$ be a closed subset of $\Omega$ and $p$ an integer, $0<p<n$ ($n\geq2$). We say that $Z$ is $p-$pseudoconcave in $\Omega$ if for every open set $\mathcal U\subset\subset \Omega$ and every holomorphic map $f$ from a neighborhood of $\overline{\mathcal U}$ into $\C^p$ we have $f(Z\cap \mathcal U)\subset \C^p\smallsetminus V$ where $V$ is the bounded component of $\C^p\smallsetminus f(Z\cap \partial \mathcal U)$.
        \end{defn}
        Pseudoconcave sets  were studied by Forn{\ae}ss-Sibony \cite{Fo-Si}, Dinh-Lawrence \cite{Di-La} and others. It was shown that the support of a positive plurisuperharmonic current is an example of pseudoconcave sets, Precisely we have:
        \begin{lem}(See \cite{Di-La, Fo-Si})\label{lem2}
            Let $T$ be a positive plurisuperharmonic current of  bidimension $(p,p)$ on an open set $\Omega$ of $\C^n$ ($1\leq p\leq n-1$). Then $X:=Supp(T)$ is  $p-$pseudoconcave in $\Omega$.
        \end{lem}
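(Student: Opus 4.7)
The result is a transposition of standard arguments from Forn\ae ss--Sibony \cite{Fo-Si} and Dinh--Lawrence \cite{Di-La} to the plurisuperharmonic setting. The plan is to argue by contradiction via a plurisubharmonic barrier constructed from $f$.

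Suppose $X=Supp(T)$ is not $p$-pseudoconcave. Then there exist $\mathcal{U}\subset\subset\Omega$, a holomorphic $f$ defined on a neighborhood of $\overline{\mathcal{U}}$ with values in $\C^p$, a point $z_0\in X\cap\mathcal{U}$, and the bounded component $V$ of $\C^p\smallsetminus f(X\cap\partial\mathcal{U})$ such that $f(z_0)\in V$. After a translation in $\C^p$ I may assume $f(z_0)=0$. Since $f(X\cap\partial\mathcal{U})$ is compact and $0\in V$, the distance $d_0:=\mathrm{dist}(0,f(X\cap\partial\mathcal{U}))$ is strictly positive; picking $\varepsilon\in(0,d_0)$ and setting $\psi(w)=|w|^2-\varepsilon^2$, the pullback $\varphi:=\psi\circ f$ is plurisubharmonic near $\overline{\mathcal{U}}$ with $\varphi(z_0)=-\varepsilon^2<0$ and $\varphi>0$ on all of $X\cap\partial\mathcal{U}$.

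Next I localize: the open set $\mathcal{V}:=\{z\in\mathcal{U}:\varphi(z)<0\}$ contains $z_0$, and by construction no point of $X\cap\partial\mathcal{U}$ lies in $\overline{\mathcal{V}}$, so $X\cap\overline{\mathcal{V}}\subset\subset\mathcal{U}$. Thus any test form supported in a neighborhood of $\overline{\mathcal{V}}$ meets $X$ only in a compact subset of $\mathcal{U}$, and in the integration by parts performed below, the boundary contributions will come only from $\{\varphi=0\}$ and not from $\partial\mathcal{U}$. The contradiction is then obtained by comparing two expressions for the quantity
$$I=\int_{\mathcal{V}}T\w(dd^c\varphi)^p$$
(after a standard regularization of $\varphi$ and multiplication by a smooth cutoff $\chi(\varphi)$ with $\chi$ convex, non-negative and vanishing on $[0,+\infty)$). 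On the one hand, $T\geq 0$ and $(dd^c\varphi)^p\geq 0$ with $dd^c\varphi\geq cf^\star\beta$ for some $c>0$, so near $z_0$, where $T$ has strictly positive projective mass $\nu_T(z_0,r)>0$, the integrand receives a strictly positive contribution. On the other hand, Stokes' theorem shifts the $dd^c$'s from $\varphi$ onto $T$, producing an integral whose integrand involves $(-\varphi)\geq 0$, $(dd^c\varphi)^{p-1}\geq 0$, and $dd^cT\leq 0$, hence of sign $\leq 0$, while the boundary terms at $\{\varphi=0\}$ vanish thanks to $\chi(\varphi)=0$ there. The two estimates together force $I=0$ and simultaneously $I>0$, a contradiction.

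The hard part is the Stokes step: since $T$ is only a current (not smooth) and the non-closed condition $dd^cT\leq 0$ interacts delicately with the boundary contributions, a careful regularization is needed---smoothing $\varphi$ by convolution, approximating $\chi$ by smooth functions, and passing to the limit while controlling the mass of $dd^cT$ on $\{\varphi=0\}$. An additional subtlety arises where $f$ fails to have maximal rank (so $(dd^c\varphi)^p$ degenerates); this may require replacing $f$ by a small generic perturbation that keeps $f(z_0)$ inside $V$ while restoring maximal rank on a neighborhood of $z_0$. These are precisely the technical ingredients worked out in \cite{Fo-Si,Di-La}, to which the proof reduces.
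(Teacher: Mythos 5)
Your sketch cannot be repaired, and the quickest way to see this is that the configuration from which you propose to derive a contradiction is realized by perfectly legitimate currents. Take $n=2$, $p=1$, $T=[\{z_2=0\}]$ (a closed, hence positive plurisuperharmonic, current), $\mathcal{U}=B(0,1)$ and $f(z)=z_1$. Then $f(X\cap\partial\mathcal{U})$ is the unit circle, the bounded component $V$ of its complement is the unit disc, and $z_0=0\in X\cap\mathcal{U}$ satisfies $f(z_0)=0\in V$: every hypothesis of your reductio holds, yet $T$ exists, so no correct chain of estimates can end in a contradiction. (What this example really reveals is that the definition as transcribed in the paper is inaccurate; in \cite{Di-La} pseudoconcavity is the opposite, maximum-principle inclusion $f(X\cap\mathcal{U})\subset\widehat{f(X\cap\partial\mathcal{U})}$, i.e.\ the image of the inside must avoid the \emph{unbounded} component of $\C^p\smallsetminus f(X\cap\partial\mathcal{U})$. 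It is that statement which the references prove -- essentially by the local maximum principle for psh functions on $\mathrm{supp}(T)$, applied to $\log|P\circ f|$ for all polynomials $P$ -- and which the paper, giving no proof of its own, is quoting.)

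Concretely, two steps of your argument fail, and the example pinpoints both. First, the strict positivity of $I$ is unjustified: $(dd^c\varphi)^p=f^\star\bigl((dd^c|w|^2)^p\bigr)$ is a \emph{degenerate} positive form, and $T\w f^\star(dd^c|w|^2)^p$ can vanish identically near $z_0$ even though $z_0\in\mathrm{supp}(T)$ (e.g.\ when $T$ charges the fibre directions of $f$); the bound $\nu_T(z_0,r)>0$ concerns the trace against the ambient $\beta^p$ and gives no lower bound against a degenerate form, and perturbing $f$ to maximal rank does not help, since the obstruction is the position of $T$ relative to $\ker df$, not the rank of $f$. Second, and fatally, the Stokes step has the wrong sign and discards a term that is not a boundary term. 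Writing $\eta'=\chi$ with $\eta=0$ on $[0,+\infty)$, the correct identity is $\int\chi(\varphi)\,T\w(dd^c\varphi)^p=\int\eta(\varphi)\,dd^cT\w(dd^c\varphi)^{p-1}-\int\chi'(\varphi)\,T\w d\varphi\w d^c\varphi\w(dd^c\varphi)^{p-1}$; here $dd^c$ is self-adjoint, $\langle dd^cT,u\,\Gamma\rangle=\langle T,dd^cu\w\Gamma\rangle$ for closed smooth $\Gamma$, with no minus sign. Since $\eta\leq0$, $dd^cT\leq0$ and $\chi'\leq0$, \emph{both} right-hand terms are $\geq0$, exactly like the left-hand side, so no contradiction appears; in the example above ($dd^cT=0$) the gradient term carries the whole mass $I>0$, so it certainly does not vanish "because $\chi(\varphi)=0$ on $\{\varphi=0\}$". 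The sign problem is structural: the maximum principle for $dd^cT\leq0$ requires a \emph{nonnegative} psh weight (as in $0\leq\int T\w\beta^p=\int|z|^2\,dd^cT\w\beta^{p-1}\leq0$ for compactly supported $T$), and even with such a weight one only obtains an Oka-type inequality bounding interior mass by collar mass; the passage from there to pseudoconcavity in \cite{Fo-Si,Di-La} is a further, genuinely global argument (local maximum principle, or push-forward of $T$ by $f$ and the minimum principle for superharmonic functions), of which your sketch contains no counterpart.
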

        The fundamental tool in the proof of Theorem \ref{th3} is the following lemma.
        \begin{lem}(See \cite{Di-La})\label{lem3}
            Let $\Omega$ be a complex manifold of dimension $n\geq2$ and $X$ a $p-$pseudoconcave subset of $\Omega$. Let $K$ be a compact subset of $\Omega$ which admits a Stein neighborhood. Assume that the $2p-$dimensional Hausdorff measure of $X \smallsetminus K$ is locally finite in $\Omega \smallsetminus K$. Then $X$ is a complex subvariety of pure dimension $p$ of $\Omega$.
        \end{lem}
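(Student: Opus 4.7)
The plan is to first show, via Lemma~\ref{lem3}, that $X$ is a complex subvariety of pure dimension $p$, and then derive the representation $T=-\varphi[X]$ from a support-type theorem for positive currents on analytic sets. Lemma~\ref{lem2} already furnishes the $p$-pseudoconcavity required by Lemma~\ref{lem3}, so the heart of the matter is to establish local finiteness of the $2p$-dimensional Hausdorff measure of $X$. My approach is to propagate the Lelong-number bound $\nu_T(z)\geq\delta$ from the dense subset $E_\delta$ to all of $X$ by proving upper semi-continuity of $z\mapsto\nu_T(z)$ under the given integrability hypothesis.

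For the upper semi-continuity I reuse the auxiliary function
\[
\Lambda_z(r)\;:=\;\nu_T(z,r)+\int_0^r\Bigl(\frac{t^{2p}}{r^{2p}}-1\Bigr)\frac{\nu_{dd^cT}(z,t)}{t}\,dt
\]
from the proof of Theorem~\ref{th2}: it is increasing in $r$ and converges to $\nu_T(z)$ as $r\to 0^+$, so $\nu_T(z)=\inf_{r>0}\Lambda_z(r)$. For any $z_0\in X$, picking a sequence $r_n\downarrow 0$ avoiding the countable set of radii on which $T\w\beta^p$ charges $\partial B(z_0,r_n)$, dominated convergence gives continuity at $z_0$ of $z\mapsto \nu_T(z,r_n)$, and the assumed local uniform integrability of $\nu_{dd^cT}(z,t)/t$ gives continuity at $z_0$ of the correction integral. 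Hence each $\Lambda_\cdot(r_n)$ is continuous at $z_0$, and $\limsup_{z\to z_0}\nu_T(z)\leq \inf_n\Lambda_{z_0}(r_n)=\nu_T(z_0)$. So $E_\delta$ is closed, and density in $X$ forces $X\subset E_\delta$.

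With $\nu_T(z)\geq\delta$ on every $z\in X$, the monotonicity $\Lambda_z(r)\geq\nu_T(z)$ together with the uniform integrability yields a mass lower bound $\nu_T(z,r)\geq\delta/2$ for $z\in X\cap K$ and $r$ smaller than some $r_0(K)$. A Vitali covering of $X\cap K$ by such balls, \`a la Siu, then bounds $\mathcal{H}^{2p}(X\cap K)$ by $C\,M(T,K')/\delta$ for a slightly enlarged compact $K'$, so $\mathcal{H}^{2p}(X)$ is locally finite. Applying Lemma~\ref{lem3} (with $K$ a single point admitting a Stein neighborhood) yields the pure $p$-dimensional analytic structure of $X$. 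For the representation, $T$ being a positive $(p,p)$-current supported on an analytic set of pure dimension $p$ triggers the classical support theorem: $T=h[X]$ with $h\geq 0$ locally integrable on the regular part; writing $\varphi:=-h\leq 0$ and using $d[X]=0$, the plurisuperharmonicity $dd^cT\leq 0$ rewrites as $dd^c\varphi\w[X]\geq 0$, the desired weak plurisubharmonicity of $\varphi$ on $X$.

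The main obstacle is the upper semi-continuity step: although the template from Theorem~\ref{th2} is clear, one must carefully combine the sphere-avoiding choice of $r_n$ with the local uniform integrability of $\nu_{dd^cT}(z,t)/t$ to justify continuity of $\Lambda_\cdot(r_n)$ at $z_0$ and thereby commute the two limits. A secondary technicality is the identification of $\varphi$ across the singular locus of $X$: a Skoda--El Mir-type extension argument is needed to ensure the trivial extension of $h$ from $X_{\text{reg}}$ to all of $X$ in fact coincides with $T$, and not with a larger current carrying an extra piece supported in $X_{\text{sing}}$.
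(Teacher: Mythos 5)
You have proved the wrong statement. The lemma in question is Lemma~\ref{lem3} itself: a purely geometric assertion that a $p$-pseudoconcave subset $X$ of $\Omega$, whose $2p$-dimensional Hausdorff measure is locally finite outside a compact $K$ admitting a Stein neighborhood, is a complex subvariety of pure dimension $p$. Its hypotheses involve no current, no Lelong numbers, and no level set $E_\delta$. What you wrote is instead a proof of Theorem~\ref{th3}, the structure theorem for a positive plurisuperharmonic current $T$: you establish upper semi-continuity of $\nu_T$ via the functions $\Lambda_z$ of Theorem~\ref{th2}, deduce that $E_\delta$ is closed and that $\mathscr H^{2p}(X)$ is locally finite, and then conclude by \emph{applying} Lemma~\ref{lem3}. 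As a proof of Lemma~\ref{lem3} this is circular: the one step in your argument that produces the analytic structure of $X$ is exactly the statement to be proved, and you invoke it as a black box. (Your argument in fact closely parallels the paper's own proof of Theorem~\ref{th3}, including the closedness of $E_\delta$ and the final support-theorem step; but that is a different statement from the one assigned.)

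The actual content of Lemma~\ref{lem3} --- which the paper itself does not prove either, quoting it from \cite{Di-La} --- requires entirely different tools: one must show that $p$-pseudoconcavity forces suitable local projections of $X$ to $\C^p$ to be proper surjections onto open sets, so that on $\Omega\smallsetminus K$ the locally finite $\mathscr H^{2p}$ measure makes $X$ a branched cover, hence analytic of pure dimension $p$ by Bishop--Shiffman type analyticity criteria, and then one must propagate the analyticity across the compact $K$ using its Stein neighborhood (a removable-singularity argument for analytic sets). None of this geometric-measure-theoretic work appears in your proposal, so the gap is not a repairable technicality: a proof of the stated lemma is entirely missing, and the material you did write belongs to Theorem~\ref{th3}, where Lemma~\ref{lem3} is legitimately used as an external input.
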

        Now we can prove Theorem \ref{th3}.
        \begin{proof}
            It is sufficient to prove that $X$ is a complex subvariety of pure dimension $p$ of $\Omega$. Thanks to lemmas \ref{lem2} and  \ref{lem3}, we have to prove that $X$ has locally finite $\mathscr H^{2p}$ Hausdorff measure. In fact we prove that $X=E_\delta$ and $E_\delta$ has locally finite $\mathscr H^{2p}$ measure. For the last affirmation, we remark that for every $z\in E_\delta$ one has
            $$\delta\leq \nu_T(z)=\lim_{t\to0}\frac{\sigma_T(B(z,t))}{t^{2p}}$$ which proves that $E_\delta$ has locally finite $\mathscr H^{2p}$ measure. Since $E_\delta\subset X$, To prove $X=E_\delta$, it suffice to prove  $E_\delta$ is closed in $\Omega$. For this, Let $(\xi_j)_j\subset E_\delta$ and $\xi_j\underset{j\to+\infty}\longrightarrow \xi\in\Omega$. Fix an $r_0>0$ such that $B(\xi,r_0)\subset \Omega$ and $j_0$ such that $\xi_j\in B(\xi,r_0)$ for all $j\geq j_0$. Since $\sigma_T:= T\w\beta^p$ is a positive measure, then for every $0<r<r_0$ and for every $j\geq j_1$ (for which $0<|\xi-\xi_j|<r$), we have $\sigma_T(B(\xi,r))\geq \sigma_T(B(\xi_j,r-|\xi-\xi_j|))$. One has
            $$\delta\leq \nu_T(\xi_j)\leq \Lambda_{\xi_j}(s_j):= \nu_T(\xi_j,s_j)+ \int_0^{s_j}\left(\frac{t^{2p}}{s_j^{2p}}-1\right)\frac{\nu_{dd^cT}(\xi_j,t)}{t}dt.$$
            where $s_j=r-|\xi-\xi_j|$. So,
            $$\begin{array}{lcl}
                  \ds \sigma_T(B(\xi,r))&\geq &\ds \delta s_j^{2p}-s_j^{2p} \int_0^{s_j}\left(\frac{t^{2p}}{s_j^{2p}}-1\right)\frac{\nu_{dd^cT}(\xi_j,t)}{t}dt \\
                   & \geq &\ds \delta s_j^{2p}-s_j^{2p} \mathscr I_j.
                \end{array}$$
            We set $\mathscr I_j=\mathscr I_{j,1}+\mathscr I_{j,2}$ where
            $$\begin{array}{lcl}
                0 \leq \mathscr I_{j,1} & = & \ds\int_0^{|\xi-\xi_j|}\left(\frac{t^{2p}}{(r-|\xi-\xi_j|)^{2p}}-1\right)\frac{\nu_{dd^cT}(\xi_j,t)}{t}dt \\
                 & \leq &\ds -\int_0^{|\xi-\xi_j|}\frac{\nu_{dd^cT}(\xi_j,t)}{t}dt \underset{\xi_j\to\xi}\longrightarrow0.
              \end{array}$$
            and
            $$\begin{array}{lcl}
                \mathscr I_{j,2} & = & \ds\int_{|\xi-\xi_j|}^{r-|\xi-\xi_j|}\left(\frac{t^{2p}}{(r-|\xi-\xi_j|)^{2p}}-1\right) \frac{\nu_{dd^cT}(\xi_j,t)}{t}dt \\
                 & \leq &\ds \int_{|\xi-\xi_j|}^{r-|\xi-\xi_j|}\left(\frac{t^{2p}}{(r-|\xi-\xi_j|)^{2p}}-1\right) \frac{\sigma_{dd^cT}(B(\xi,t+|\xi-\xi_j|))}{t^{2p-1}}dt\\
                 & \underset{\xi_j\to\xi}\longrightarrow & \ds \int_0^r\left(\frac{t^{2p}}{r^{2p}}-1\right) \frac{\sigma_{dd^cT}(B(\xi,t))}{t^{2p-1}}dt =\ds \int_0^r\left(\frac{t^{2p}}{r^{2p}}-1\right) \frac{\nu_{dd^cT}(\xi,t)}{t}dt.
              \end{array}$$
            Hence
            $$\nu_T(\xi,r)\geq \delta-\int_0^r\left(\frac{t^{2p}}{r^{2p}}-1\right) \frac{\nu_{dd^cT}(\xi,t)}{t}dt.$$
            If $r\to0$, we obtain  $\nu_T(\xi)\geq \delta$; so $\xi\in E_\delta$ which proves that $E_\delta$ is closed.\\
            We can conclude from previous computations  the upper-semi-continuity of $\nu_T$ on $\Omega$.
        \end{proof}

\section{Proof of the main result}
    In this part, we study the existence of the tangent cone to positive plurisubharmonic or plurisuperharmonic currents on an open neighborhood  $\Omega$ of 0 in $\C^n$. The principal result will be proved partially with a different way in the third section.\\

    In the following, we will use $\mathscr P^+_p(\Omega)$ (resp. $\mathscr P^-_p(\Omega)$) to indicate the set of positive plurisubharmonic currents (resp. positive plurisuperharmonic currents satisfying condition $(C)_0$) of bidimension $(p,p)$ on $\Omega$ where $0<p<n$.\\

    \noindent\textbf{Theorem \ref{th1}. (Main result)}
        \textit{Let $T\in\mathscr P_p^\pm(\Omega)$.  Then the tangent cone to $T$ exists when
        $$\int_0^{r_0}\frac{|\nu_T(r)-\nu_T(0)|}rdr<+\infty$$
        where $r_0$ is a positive real such that $B(r_0)\subset\subset\Omega$.}\\

    This theorem is due to Blel-Demailly-Mouzali in case of positive closed currents. In \cite{Ha}, Haggui proved the same result for $T\in\mathscr P_p^+(\Omega)$. His proof is based on the potential  current associated to $dd^cT$. Here, we present a proof which is different from Haggui's one.
    \begin{rem}
        Condition $(C)_0$ is not necessary in Theorem \ref{th1}. In fact the current $T_0$ of Example \ref{exple1} admits a tangent cone and doesn't satisfy Condition $(C)_0$. Indeed $h_a^\star T_0=T_0$ ($T_0$ is conic on $\C^2$).
    \end{rem}
    \begin{proof}
        Let $T\in\mathscr P_p^+(\Omega)$ (resp. $T\in\mathscr P_p^-(\Omega)$). Using $h_a^\star T$ in equality (\ref{eq1.1}) and the equality $\nu_{h_a^\star T}(r)=\nu_T(|a|r)$ for all $|a|<r_0/r$,  we find
        \begin{equation}\label{eq2.1}
            \int_{B(r)}h_a^\star T\w\beta^p\leq \nu_T(r_0)r^{2p},\quad \forall\; |a|\leq \frac{r_0}r.
        \end{equation}
        resp.
        \begin{equation}\label{eq2.2}
            \int_{B(r)}h_a^\star T\w\beta^p\leq \Lambda_0(r_0)r^{2p},\quad \forall\; |a|\leq \frac{r_0}r.
        \end{equation}
        In both cases, equations  (\ref{eq2.1}) and (\ref{eq2.2}) give the mass of $(h_a^\star T)$ is uniformly small in the neighborhood of 0. Hence $(h_a^\star T)$ converges weakly on $\C^n$ if and only if it converges weakly in the neighborhood of every point $z^0\in\C^n\smallsetminus \{0\}$. After a suitable dilatation and a unitary changement of coordinates, we can assume that $z^0=(0,...,0,z_n^0)$ where $1/2<z_n^0<1$. We use projective coordinates and we set
        $$w_1=\frac{z_1}{z_n},...,\ w_{n-1}=\frac{z_{n-1}}{z_n},\ w_n=z_n$$
        and $$T=2^{-q}i^{q^2}\sum_{|I|=|J|=q}T_{I,J}dw_I\w d\overline{w}_J$$ where $q=n-p$. The dilatation $h_a$ is written as $h_a:\ w=(w',w_n)\mapsto (w',aw_n)$ with $w'=(w_1,...,w_{n-1})$. We verify that the coefficients $T_{I,J}^a$ of $h_a^\star T$ are given by
        \begin{equation}\label{eq2.3}
            T_{I,J}^a(w)=\left\{
                \begin{array}{lcl}
                    T_{I,J}(w',aw_n)& if & n\not\in I,\ n\not\in J \\
                    a\;T_{I,J}(w',aw_n)& if & n\in I,\ n\not\in J \\
                    \overline{a}\;T_{I,J}(w',aw_n)& if & n\not\in I,\ n\in J \\
                    |a|^2\;T_{I,J}(w',aw_n)& if & n\in I,\ n\in J
                \end{array}\right.
        \end{equation}
        The proof of the main result when $T\in\mathscr P_p^-(\Omega)$ is similar to the case  $T\in\mathscr P_p^+(\Omega)$ with some simple modification, for this reason we will continuous this proof with $T\in\mathscr P_p^+(\Omega)$.\\
        We need the following lemma:
        \begin{lem}\label{lem4}
            Let $U$ be the neighborhood of $z^0$ given by
            $$U=\{z\in\C^n;\ |z|<1,\ 1/2<|z_n|<1\}\subset B(1/2,1).$$ We consider the two functions $\gamma_T(r)=\nu_T(r)-\nu_T(r/2)$ and $\gamma_{dd^cT}(r)=\nu_{dd^cT}(r)-\nu_{dd^cT}(r/2)$ defined on $]0,R[$. For $r_0<R$, there exist three positive constants $C_1,\ C_2$ and $C_3>0$ such that for $|a|<r_0$, the measure $T_{I,J}^a$ satisfies the following estimates
            \begin{equation}\label{eq2.4}
                \int_U|T_{I,J}^a|\leq\left\{
                    \begin{array}{ll}
                        C_1& \hbox{for all }I,\ J\\
                        C_2\left(\gamma_T(|a|)+\gamma_{dd^cT}(|a|)\right)& if\ n\in I,\ and\ n\in J\\
                        C_3\sqrt{\gamma_T(|a|)+\gamma_{dd^cT}(|a|)}& if\ n\in I,\ or\ n\in J
                    \end{array}\right.
            \end{equation}
        \end{lem}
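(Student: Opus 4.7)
The plan is to reduce the three inequalities of (\ref{eq2.4}) to two diagonal mass bounds $\int_U T_{I,I}^a\,dV$, one ``shrinking'' in $|a|$ and one uniform, and to recover off-diagonal estimates via the positivity inequality
$$\int_U |T_{I,J}^a| \;\leq\; \bigl(T_{I,I}^a(U)\bigr)^{1/2}\bigl(T_{J,J}^a(U)\bigr)^{1/2}.$$
The first line of (\ref{eq2.4}) is then immediate: every coefficient of a positive current is pointwise dominated by a constant times the trace $\sigma_{h_a^\star T}^w = h_a^\star T\wedge \beta_w^p$, and (\ref{eq2.1}) together with $U\subset B(R_0)$ for a fixed $R_0$ supplies a uniform bound on this trace, hence on $\int_U |T_{I,J}^a|$.

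For the second line (case $n\in I$ and $n\in J$), the Cauchy--Schwarz inequality above reduces the task to bounding $\int_U T_{I,I}^a\,dV$ with $n\in I$. By (\ref{eq2.3}) one has $T_{I,I}^a(w) = |a|^2 T_{I,I}(w',aw_n)$, and the substitution $\zeta = (w', aw_n)$ (of complex Jacobian $|a|^2$) yields $\int_U T_{I,I}^a\,dV(w) = T_{I,I}(h_a(U))$. The defining inequalities of $U$ translate into $h_a(U)\subset\{|a|/2<|z|<|a|\}$ in $z$-coordinates. Writing $I = I_0\cup\{n\}$ with $I_0^c = \{1,\ldots,n-1\}\setminus I_0$ of cardinality $p$, the positive $(p,p)$-form $\phi_I = c_I(i/2\pi)^p\,dw_{I_0^c}\wedge d\bar w_{I_0^c}$ satisfies $T\wedge \phi_I = T_{I,I}\,dV(w)$. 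The decisive identity is that on $U$, where $|w_n|>1/2$, the decomposition $\log|z|^2 = \log(1+|w'|^2) + \log|w_n|^2$ together with the pluriharmonicity of the second term gives $\alpha = dd^c\log(1+|w'|^2) =: \omega'$; since $\omega'$ is a smooth, strictly positive $(1,1)$-form involving only the variables $w_k$, $k<n$, uniformly equivalent on the bounded $|w'|$-range of $U$ to $\beta_{w'} = (i/2\pi)\sum_{k<n} dw_k\wedge d\bar w_k$, the form $\phi_I$ is dominated by a constant multiple of $\alpha^p$ as positive $(p,p)$-forms on $U$. Positivity of $T$ then delivers
$$T_{I,I}(h_a(U)) \;=\; \int_{h_a(U)} T\wedge\phi_I \;\leq\; C\int_{B(|a|/2,|a|)} T\wedge\alpha^p.$$
Applying Lemma \ref{lem1} with $r_1 = |a|/2$, $r_2 = |a|$ and discarding non-negative contributions in the PSH case, or using the identity $\Lambda(|a|) - \Lambda(|a|/2) = \int_{B(|a|/2,|a|)} T\wedge\alpha^p$ from the proof of Theorem \ref{th2} in the plurisuperharmonic case, bounds the right-hand side by $C\bigl(\gamma_T(|a|) + \gamma_{dd^cT}(|a|)\bigr)$, which is the required estimate.

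The third line (exactly one of $I$, $J$ contains $n$) follows from Cauchy--Schwarz combining the $\gamma$-bound just obtained with the uniform bound from the first line, the geometric mean producing the square root. I anticipate the main technical obstacle to be the comparison $\phi_I \leq C\,\alpha^p$ on $U$: this reduces to a standard Fubini--Study-type positivity estimate in $\mathbb{C}^{n-1}$, but uniform control over the bounded range of $w'$ on $U$ (and hence on the shrinking region $h_a(U)$) must be verified carefully. A secondary difficulty is the Lelong--Jensen bookkeeping needed to combine the $\nu_T$ and $\nu_{dd^cT}$ contributions into the stated additive bound in the plurisuperharmonic case; this mirrors, and can be read off from, the monotonicity computation for $\Lambda$ carried out in the proof of Theorem \ref{th2}.
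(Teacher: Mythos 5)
Your treatment of the diagonal coefficients is essentially the paper's own argument: the uniform bound on $\int_U T_{M,M}^a$ comes from (\ref{eq2.1}), and for $n\in M$ the key points are exactly the comparison $dw_{M^c}\w d\overline{w}_{M^c}\lesssim (dd^c|w'|^2)^p\lesssim \alpha^p$ on $U$ (valid since $M^c\subset\{1,\dots,n-1\}$ and $\alpha=dd^c\log(1+|w'|^2)\geq\frac14 dd^c|w'|^2$ there) followed by the Lelong--Jensen formula on the annulus $B(1/2,1)$ for $h_a^\star T$, which produces $\gamma_T(|a|)$ plus the $\gamma_{dd^cT}(|a|)$ correction. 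That part is sound and matches the paper.

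The gap is the inequality you use to pass from diagonal to off-diagonal coefficients, namely
$$\int_U|T_{I,J}^a|\leq \bigl(T_{I,I}^a(U)\bigr)^{1/2}\bigl(T_{J,J}^a(U)\bigr)^{1/2}.$$
This pointwise-Cauchy--Schwarz statement presupposes that the Hermitian matrix $(T_{I,J})_{|I|=|J|=q}$ of a positive $(q,q)$-current is positive semidefinite. That is true for Hermitian-positive (in particular strongly positive) currents, but positivity in the weak sense used here only forces nonnegativity of the associated Hermitian form on the Pl\"ucker cone of decomposable covectors, and for $2\leq q\leq n-2$ the coefficient matrix need not be semipositive; so the $2\times 2$ minor inequality $|T_{I,J}|^2\leq T_{I,I}T_{J,J}$ is not available. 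The correct substitute is exactly Demailly's inequality (\ref{eq2.6}), which the paper states immediately before the proof: it bounds $\lambda_I\lambda_J|T_{I,J}|$ by a sum of $T_{M,M}$ over \emph{all} $M$ with $I\cap J\subset M\subset I\cup J$. Your conclusions survive this replacement, but not automatically: for $n\in I\cap J$ every such $M$ contains $n$, so the sum is still $O(\gamma_T+\gamma_{dd^cT})$; for $n\in I\smallsetminus J$ the set $\mathscr M_{I,J}$ contains both kinds of $M$, and one must exploit the weights by taking $\lambda_1=\dots=\lambda_{n-1}=1$ and optimizing $\lambda_n=(\gamma_T(|a|)+\gamma_{dd^cT}(|a|))^{-1/2}$ to recover the square root --- this optimization step, which replaces your geometric mean, is the part your proposal is missing. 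With that substitution the proof closes and coincides with the paper's.
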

        The proof of this lemma will be done later, so we can now continuous our proof.\\
        Thanks to Lemma \ref{lem4}, $T_{I,J}^a$ tends to 0 in mass for $I$ or $J$ containing $n$, hence, to finish the proof it suffice to study the weak convergence of measures $T_{I,J}^a$ when $n\not\in I$ and $n\not\in J$.\\
        Let $\varphi\in\mathscr D(U)$. For $n\not \in I=\{i_1,...,i_q\},\ n\not\in J=\{j_1,...,j_q\}$, we set
        $$f_{I,J}(a)=\int_UT_{I,J}^a(w)\varphi(w)d\tau(w)=\int_UT_{I,J}(w',aw_n)\varphi(w)d\tau(w).$$
        $f_{I,J}$ is $\mathcal C^\infty$ on $D^*(0,R):=\{a\in\C;\ 0<|a|<R\}$ and it is bounded in a neighborhood of 0. The problem is to show that  $f_{I,J}(a)$ admits a limit when $a\to0$. The idea is to estimate $\Delta f_{I,J}$ in a neighborhood of 0. We have
        $$\frac{\partial ^2 f_{I,J}}{\partial a\partial \overline{a}}(a) =\int_U|w_n|^2\frac{\partial ^2 T_{I,J}}{\partial w_n\partial \overline{w}_n}(w',aw_n) \varphi(w)d\tau(w).$$
        We remark that the coefficient of $dw_{I\cup\{ n\}}\w d\overline{w}_{J\cup\{ n\}}$ in the expression of $dd^cT$ is
        $$\begin{array}{lcl}
            \ds(dd^cT)_{I\cup \{n\},J\cup \{n\}} & = & \ds (-1)^q\frac{\partial ^2 T_{I,J}}{\partial w_n\partial \overline{w}_n}+\sum_{k,s=1}^q (-1)^{k+q+s-2}\frac{\partial ^2 T_{I(k),J(s)}}{\partial w_{i_k}\partial \overline{w}_{j_s}}\\
            & &\ds \hfill+ \sum_{s=1}^q (-1)^{s-1}\frac{\partial ^2 T_{I,J(s)}}{\partial w_n\partial \overline{w}_{j_s}}+\sum_{k=1}^q (-1)^{k-1}\frac{\partial ^2 T_{I(k),J}}{\partial w_{i_k}\partial \overline{w}_n}
        \end{array}$$
        where $I(k)=I\smallsetminus\{i_k\}\cup\{n\}$ and $J(s)=J\smallsetminus\{j_s\}\cup\{n\}$. It follows from equality(\ref{eq2.3}), that
        $$\begin{array}{lcl}
            \ds \frac{\partial ^2 f_{I,J}}{\partial a\partial \overline{a}}(a)& = &\ds (-1)^q \int_U\frac{|w_n|^2}{|a|^2}(dd^cT)_{I\cup \{n\},J\cup \{n\}}^a\varphi(w)d\tau(w)\\
            & &\ds +\sum_{k,s=1}^q (-1)^{k+s-1}\int_U\frac1{|a|^2} T_{I(k),J(s)}^a\frac{\partial ^2 \varphi}{\partial w_{i_k}\partial \overline{w}_{j_s}}d\tau(w)\\
            & &\ds +\sum_{k=1}^q (-1)^{q+k}\int_U\frac1a T_{I(k),J}^a\frac{\partial ^2 \varphi}{\partial w_{i_k}\partial \overline{w}_n}d\tau(w)\\
            &  & \ds + \sum_{s=1}^q (-1)^{q+s}\int_U\frac1{\overline{a}} T_{I,J(s)}^a\frac{\partial ^2 \varphi}{\partial w_n\partial \overline{w}_{j_s}}d\tau(w).
        \end{array}$$
        Thanks to lemma \ref{lem4}, one has
        $$\begin{array}{lcl}
            \ds\left|\frac{\partial ^2 f_{I,J}}{\partial a\partial \overline{a}}(a)\right| & \leq  & \ds C_1\frac{\gamma_{dd^cT}(|a|)}{|a|^2}+C_2\frac{\gamma_T(|a|)+\gamma_{dd^cT}(|a|)}{|a|^2}\\
            & &\ds\hfill+C_3 \frac{\sqrt{\gamma_T(|a|)+\gamma_{dd^cT}(|a|)}}{|a|}\\
            & \leq  & C \left(\frac{\gamma_T(|a|)+\gamma_{dd^cT}(|a|)}{|a|^2} +\frac{\sqrt{\gamma_T(|a|)+\gamma_{dd^cT}(|a|)}}{|a|}\right)=C \psi(|a|).
        \end{array}$$
        Thanks to \cite[lemme 3.6]{Bl-De-Mo}, $f_{I,J}(a)$ admits a limit at 0 if $\psi$ satisfies
        $$\int_0^{r_0}r|\log r|\psi(r)dr<+\infty.$$
        A simple computation (see \cite{Bl-De-Mo}) shows that
        $$\int_0^{r_0}\frac{\gamma_T(r)+\gamma_{dd^cT}(r)}r|\log r|dr<+\infty $$
        is equivalent to $$\int_0^{r_0}\frac{\nu_T(r)-\nu_T(0)}rdr<+\infty \hbox{ and } \int_0^{r_0}\frac{\nu_{dd^cT}(r)}rdr<+\infty.$$
        those conditions are cited in the hypothesis of the main result. Hence, we have
        \begin{equation}\label{eq2.5}
            \int_0^{r_0}\frac{\gamma_T(r)+\gamma_{dd^cT}(r)}r|\log r|dr<+\infty.
        \end{equation}
        Thanks to Cauchy-Schwarz inequality, (\ref{eq2.5}) gives
        $$\begin{array}{lcl}
            \ds\int_0^{r_0}\sqrt{\gamma_T(r)+\gamma_{dd^cT}(r)}|\log r|dr & \leq & \ds \left( \int_0^{r_0}\frac{\gamma_T(r)+\gamma_{dd^cT}(r)}r|\log r|dr\right)^{1/2}\\
            & &\ds \hfill\times \left( \int_0^{r_0}r|\log r|dr\right)^{1/2}\\
            &<&+\infty.
        \end{array}$$
        Therefore,
        $$\int_0^{r_0}r|\log r|\psi(r)dr<+\infty$$
        which completes the proof of Theorem \ref{th1}.
    \end{proof}
    To prove Lemma \ref{lem4}, we need Demailly's inequality: \emph{If $$S=2^{-q}i^{q^2}\sum_{|I|=|J|=q}S_{I,J}dw_I\w d\overline{w}_J$$ is a positive $(q,q)-$current then for all $(\lambda_1,...,\lambda_n)\in]0,+\infty[^n$ we have
    \begin{equation}\label{eq2.6}
        \lambda_I\lambda_J|S_{I,J}|\leq 2^q\sum_{M\in\mathscr M_{I,J}}\lambda_MS_{M,M}
    \end{equation}
    where $\lambda_I=\lambda_{i_1}...\lambda_{i_q}$ if $I=(i_1,...,i_q)$ and the sum is taken over the set of $q-$index  $\mathscr M_{I,J}=\{M;\ |M|=q,\ I\cap J\subset M\subset I\cup J\}$.}\\

    Now, we can prove Lemma \ref{lem4}.
    \begin{proof}\
        \begin{itemize}
          \item The set $\overline{U}$ is compact and the $(1,1)-$forme $\beta$ is smooth and positive, so we have $\beta\geq C_4dd^c|w|^2$ on $U$. Inequality (\ref{eq2.1}), with $r=1$, implies $ \int_UT_{I,I}^a\leq C_5$ uniformly to $a$ for $|a|<r_0$. Demailly's inequality (\ref{eq2.6}), with the choice $\lambda_1=...=\lambda_n=1$, gives
        $$\int_U|T_{I,J}^a|\leq C_6\sum_{M\in\mathscr M_{I,J}}\int_UT_{M,M}^a\leq C_1$$
         so the first estimate in (\ref{eq2.4}) is proved.
          \item To prove the second estimate, we remark that we have $\alpha\geq C_7 \beta'$ on $U$ where $\beta'=dd^c|w'|^2$. Indeed, $\alpha=dd^c\log(1+|w'|^2)\geq \frac1{(1+|w'|^2)^2}\beta'\geq \frac14\beta'$ on $U$. Hence
              $$\begin{array}{lcl}
                  \ds\int_U\sum_{I\ni n}T_{I,I}^a & = & \ds \int_U h_a^\star T\w (dd^c|w'|^2)^p \\
                  & \leq &\ds C_8\int_U h_a^\star T\w \alpha^p \leq \ds C_8\int_{B(1/2,1)} h_a^\star T\w \alpha^p.
              \end{array}$$
              Thanks to Lelong-Jensen formula, with $r_2=1$ and $r_1=1/2$, one has
              $$\begin{array}{lcl}
                  \ds\int_U\sum_{I\ni n}T_{I,I}^a& \leq& \ds C_8\int_{B(1/2,1)} h_a^\star T\w \alpha^p\\
                  & \leq & \ds C_8\left[\nu_T(|a|)-\nu_T(|a|/2)-\int_{\frac12}^1\left(\frac1{t^{2p}} -1 \right)t^{2p-1}\nu_{dd^c(h_a^\star T)}(t)dt\right.\\
                  & & \hfill \ds-\left.\left(\frac1{2^{2p}}-1\right) \int_0^{\frac12} t^{2p-1}\nu_{dd^c(h_a^\star T)}(t)dt\right]\\
                  & \leq & \ds C_8\left[\nu_T(|a|)-\nu_T(|a|/2)-\int_{\frac12}^1\frac{\nu_{dd^cT}(|a|t)}t dt\right. \\
                  & & \hfill\ds \left.+\int_0^1t^{2p-1}\nu_{dd^cT}(|a|t)dt\right]\\
                  & \leq & \ds C_8(\nu_T(|a|)-\nu_T(|a|/2))+C_9(\nu_{dd^cT}(|a|)-\nu_{dd^cT}(|a|/2))\\
                  & \leq & \ds C_8\gamma_T(|a|)+C_9\gamma_{dd^cT}(|a|)
              \end{array}$$
              because $\nu_{dd^cT}$ is a non-negative increasing function. The second estimate is proved for $I=J\ni n$.\\
              for the general case, $I,J\ni n$, we use Demailly's inequality (\ref{eq2.6}) with $\lambda_1=...=\lambda_n=1$, to obtain
              $$\int_U|T_{I,J}^a|\leq C_{10}\sum_{M\in\mathscr M_{I,J}}\int_UT_{M,M}^a\leq C_2(\gamma_T(|a|)+\gamma_{dd^cT}(|a|))$$
              and the second estimate is proved.
          \item For the third estimate, it suffice to assume that $n\in I$ and $n\not\in J$. Again thanks to Demailly's inequality (\ref{eq2.6}), with $\lambda_1=...=\lambda_{n-1}=1$ and $\lambda_n>0$, we have
              $$\begin{array}{lcl}
                  \ds\lambda_n\int_U |T_{I,J}^a| &\leq &\ds  C_{11}\int_U\left(\sum_{n\not\in M\in\mathscr M_{I,J}}T_{M,M}^a +\lambda_n^2\sum_{n\in M\in\mathscr M_{I,J}}T_{M,M}^a \right) \\
                  & \leq & C_{12}+ C_{13}\lambda_n^2(\gamma_T(|a|)+\gamma_{dd^cT}(|a|)).
                \end{array}$$
              The third estimate can be deduced from the choice $$\lambda_n=\frac1{\sqrt{\gamma_T(|a|) +\gamma_{dd^cT}(|a|)}}.$$
        \end{itemize}
    \end{proof}

\section{Pull-Back of positive currents}
    Let $\C^n[0]:=\{(z,L)\in\C^n\times\mathbb{P}^{n-1};\ z\in L\}$ and $T$ be a positive current on $\C^n$. In this section, We study the existence of a positive current $\widehat{T}$ on $\C^n[0]$ such that $\pi_\star \widehat{T}=T$ where $\pi:\C^n[0]\longrightarrow \C^n$ is the canonical projection; in this statement, we say that $T$ admits a blow-up by $\pi$ over 0. We give a positive answer in case of positive plurisubharmonic or plurisuperharmonic currents. Finely, we apply this result to give a second proof of the main result with a supplementary condition in case of positive plurisuperharmonic currents.
    \begin{prop}\label{pro1}
        Let $T\in\mathscr P^+_p(\C^n)$ (resp. $T\in\mathscr P^-_p(\C^n)$). Then, $T$ admits a blow-up $\widehat{T}$ (a positive current  on $\C^n[0]$ such that $\pi_\star\widehat{T}=T$). Furthermore, for $r>0$ one has
        \begin{equation}\label{eq3.1}
            ||\widehat{T}||(\pi^{-1}(B(r)))\leq \nu_T(r)-\nu_T(0)+C_r\nu_T(r).
        \end{equation}
        resp.
        \begin{equation}\label{eq3.2}
            \begin{array}{lcl}
              \ds||\widehat{T}||(\pi^{-1}(B(r))) & \leq  &\ds |\nu_T(r)-\nu_T(0)|+C_r\nu_T(r) -C_r'\nu_{dd^cT}(r) \\
              &  &\ds \hfill +\int_0^r\left(\frac{t^{2p}}{r^{2p}}-1\right)\frac{\nu_{dd^cT}(t)}tdt
            \end{array}
        \end{equation}
        where $C_r:=\sum_{k=1}^pC_p^k r^{2k}$ and $C_r':=\sum_{k=1}^p \frac{C_p^k}{2k}r^{2k}$.
    \end{prop}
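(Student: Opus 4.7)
The plan is to build $\widehat{T}$ in two stages: first as a positive current on $\C^n[0]\setminus E$ (where $E:=\pi^{-1}(0)\cong\mathbb{P}^{n-1}$) by pull-back under the biholomorphism $\pi|_{\C^n[0]\setminus E}\colon\C^n[0]\setminus E\to\C^n\setminus\{0\}$, and then to extend it trivially across the hypersurface $E$ via a Skoda-El Mir type theorem. This last step is licit as soon as $\widehat{T}$ has locally finite mass near $E$, and once it is carried out the identity $\pi_\star\widehat{T}=T$ is automatic: both sides agree on $\C^n\setminus\{0\}$ by construction, the trivial extension adds no mass on $E$, and $T$ itself has no point mass at $0$ because $\lim_{r\to 0^+}r^{2p}\nu_T(r)=0$. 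The real content of the proposition is thus the mass estimates (\ref{eq3.1}) and (\ref{eq3.2}).

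For the mass bound I use on $\C^n[0]$ the K\"ahler form $\widehat{\omega}:=\pi^*\beta+\widehat{\alpha}$, where $\widehat{\alpha}:=\pi_2^*\omega_{FS}$ is the pull-back by the second projection $\pi_2\colon\C^n[0]\to\mathbb{P}^{n-1}$ of the Fubini-Study form; since $\widehat{\alpha}$ extends $\pi^*\alpha$ smoothly across $E$, the biholomorphism gives
$$\|\widehat{T}\|\bigl(\pi^{-1}(B(r))\bigr)=\int_{B(r)\setminus\{0\}}T\wedge(\beta+\alpha)^p.$$
The algebraic core is the identity
$$(\beta+\alpha)^p=(1+|z|^2)^{p-1}\!\left[\alpha^p+\frac{\beta^p}{|z|^{2(p-1)}}\right]\quad\text{on}\ \C^n\setminus\{0\},$$
which I would prove by decomposing $\beta=|z|^2\alpha+\gamma$, where $\gamma:=d|z|^2\wedge d^c|z|^2/|z|^2\geq 0$ has rank one so $\gamma\wedge\gamma=0$, and expanding $\beta^p=|z|^{2p}\alpha^p+p|z|^{2(p-1)}\alpha^{p-1}\wedge\gamma$.

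This reduces the estimate to two integrals. The $\beta^p$-piece is computed via Stieltjes integration by parts against $t\mapsto t^{2p}\nu_T(t)=\sigma_T(B(t))$; using the monotonicity of $\nu_T$ the boundary and interior contributions telescope, via $(1+r^2)^p-1=C_r$, to $C_r\nu_T(r)$. The $\alpha^p$-piece is handled by Lemma \ref{lem1} in the limit $r_1\to 0^+$. Setting $H(t):=\int_{B(t)\setminus\{0\}}T\wedge\alpha^p$ and $D(t):=\nu_T(t)-\nu_T(0)-H(t)$, the Lelong-Jensen formula identifies $D$ with the Lelong-Jensen correction involving $\nu_{dd^cT}$; in the $\mathscr P_p^+$ case $D$ is non-negative, non-decreasing and $D(0)=0$. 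Substituting $H(t)=\nu_T(t)-\nu_T(0)-D(t)$ into the sum of the two pieces and performing a second integration by parts makes the $D$-contributions cancel exactly and leaves
$$(\nu_T(r)-\nu_T(0))+C_r\nu_T(r)-\int_0^r(1+t^2)^{p-1}D'(t)\,dt,$$
which yields (\ref{eq3.1}) since the last integral is $\geq 0$ in the $\mathscr P_p^+$ case.

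For the plurisuperharmonic class $\mathscr P_p^-$, condition $(C)_0$ secures the existence of $\nu_T(0)$ via Theorem \ref{th2} and the convergence of the limit terms in Lelong-Jensen. The same strategy produces (\ref{eq3.2}): $\int T\wedge\alpha^p$ now contributes $|\nu_T(r)-\nu_T(0)|$, the $\beta^p$-piece still yields $C_r\nu_T(r)$, and the residual $-C_r'\nu_{dd^cT}(r)+\int_0^r(t^{2p}/r^{2p}-1)\nu_{dd^cT}(t)/t\,dt$ is precisely what survives of the $\nu_{dd^cT}$-correction, with $C_r'=\sum_{k=1}^p\binom{p}{k}r^{2k}/(2k)$ arising from the elementary integrals $\int_0^r t^{2k-1}\,dt=r^{2k}/(2k)$. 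The main obstacle is precisely this last step: in the plurisuperharmonic case $D$ flips sign and the $\nu_{dd^cT}$-contributions no longer cancel as cleanly, so signs must be tracked carefully and condition $(C)_0$ must be invoked to control the limit $r_1\to 0^+$ of the second term of Lelong-Jensen.
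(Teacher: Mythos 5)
Your proposal is correct, and the mass estimate is obtained by a genuinely different computation than the paper's. The construction of $\widehat{T}$ (pull-back off $E=\pi^{-1}(0)$, trivial extension once local finiteness of mass near $E$ is known) is the same in both; the difference is in how $\int_{B(\epsilon,r)}T\w(\alpha+\beta)^p$ is estimated. The paper expands binomially, $(\alpha+\beta)^p=\sum_k C_p^k\,\alpha^{p-k}\w\beta^k$, and applies the Lelong--Jensen formula separately to each positive plurisub/superharmonic current $T\w\beta^k$ of bidimension $(p-k,p-k)$ on the annulus $B(\epsilon,r)$, then lets $\epsilon\to0$. You instead use the exact pointwise identity $(\beta+\alpha)^p=(1+|z|^2)^{p-1}\bigl[\alpha^p+|z|^{-2(p-1)}\beta^p\bigr]$, which your decomposition $\beta=|z|^2\alpha+\gamma$ with $\gamma\w\gamma=0$ does prove, and reduce everything to two radial Stieltjes integrals plus a single application of Lelong--Jensen. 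I checked that your cancellation is real: integrating by parts, $\int_{B(r)\smallsetminus\{0\}}(1+|z|^2)^{p-1}|z|^{-2(p-1)}T\w\beta^p=(1+r^2)^{p-1}r^2\nu_T(r)+2(p-1)\int_0^r\nu_T(t)\,t(1+t^2)^{p-2}dt$, whose integral term cancels exactly against the one produced by $\int_0^r(1+t^2)^{p-1}d\nu_T(t)$, leaving $(1+r^2)^p\nu_T(r)-\nu_T(0)=\nu_T(r)-\nu_T(0)+C_r\nu_T(r)$; the leftover $-\int_0^r(1+t^2)^{p-1}dD(t)$ is $\leq0$ in the plurisubharmonic case, giving (\ref{eq3.1}), and in the plurisuperharmonic case it is bounded by $\mathscr J_0(r)-C_r'\nu_{dd^cT}(r)$ via $D'(t)=2p\,t^{-2p-1}\int_0^ts^{2p-1}\nu_{dd^cT}(s)\,ds$, Fubini, and the identity $C_{p-1}^k\cdot\frac{p}{p-k}=C_p^k$ --- so the constants agree with (\ref{eq3.2}) exactly. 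The paper's route is more modular (one lemma reused $p$ times); yours compresses the combinatorics into one algebraic identity and makes it transparent why the $\beta^p$-mass and the variation of $\nu_T$ recombine into $(1+r^2)^p\nu_T(r)-\nu_T(0)$. The only thing you owe is the sign-tracking in the plurisuperharmonic case that you explicitly deferred; as verified above, it goes through under condition $(C)_0$, which you correctly invoke to kill the boundary terms at $0$.
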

    Proposition \ref{pro1} is proved by Giret \cite{Gi} in the case of positive closed currents.
    \begin{proof}
        Let $T\in\mathscr P^\pm_p(\C^n)$. The canonical projection $\pi$ is a submersion from $\C^n[0]\smallsetminus\pi^{-1}(\{0\})$ to $\C^n\smallsetminus\{0\}$, so $\mathcal T:=\pi^\star(T_{|\C^n\smallsetminus\{0\}})$ exists. Furthermore  $\mathcal T$ has a locally finite mass near every point of $\pi^{-1}(\{0\})$. Let $\widehat{T}$ be the trivial extension of $\mathcal T$ by zero over $\mathbb P^{n-1}\approx\{0\}\times\mathbb P^{n-1}=\pi^{-1}(\{0\})$, $\widehat{T}$ is positive on $\C^n[0]$ and $\pi_\star\widehat{T}=T$. To prove Inequality (\ref{eq3.1}) (resp. (\ref{eq3.2})), let $\omega$ be the K\"ahler form of $\C^n[0]$. Then, for $0<\epsilon<r$, we have
        $$\begin{array}{lcl}
             \ds||\mathcal T||(\pi^{-1}(B(\epsilon,r)))& = &\ds\int_{B(\epsilon,r)}T\w\pi_\star\omega^p =\int_{B(\epsilon,r)}T\w(\alpha+\beta)^p\\
             & = &\ds \sum_{k=0}^pC_p^k\int_{B(\epsilon,r)}T\w\alpha^{p-k}\w\beta^k\\
             & = &\ds \sum_{k=0}^{p-1}C_p^k\int_{B(\epsilon,r)}T\w\beta^k\w\alpha^{p-k}+\int_{B(\epsilon,r)}T\w\beta^p.
          \end{array}$$
          \begin{itemize}
            \item \emph{First case: $T\in\mathscr P_p^+(\C^n)$.} For every $0\leq k\leq p-1$, Lelong-Jensen formula applied to the current $T\w \beta^k$ gives
                $$\begin{array}{lcl}
                    \ds \int_{B(\epsilon,r)}T\w\beta^k\w\alpha^{p-k}& =& \ds\frac1{r^{2(p-k)}}\int_{B(r)} T\w\beta^p -\frac1{\epsilon^{2(p-k)}} \int_{B(\epsilon)}T\w\beta^p\\
                    & &\hfill \ds-\int_\epsilon^r\left(\frac1{t^{2(p-k)}} -\frac1{r^{2(p-k)}} \right)tdt\int_{B(t)}dd^cT\w\beta^{p-1}\\
                    & &\hfill \ds-\left(\frac1{\epsilon^{2(p-k)}}-\frac1{r^{2(p-k)}}\right)\int_0^\epsilon tdt\int_{B(t)} dd^cT\w \beta^{p-1}\\
                    &\leq& r^{2k}\nu_T(r)-\epsilon^{2k}\nu_T(\epsilon)
                \end{array}$$
                hence,
                $$\begin{array}{lcl}
                        \ds||\mathcal T||(\pi^{-1}(B(\epsilon,r)))& = &\ds r^{2p}\nu_T(r)-\epsilon^{2p}\nu_T(\epsilon) +\sum_{k=0}^{p-1}C_p^k\int_{B(\epsilon,r)}T\w\beta^k\w\alpha^{p-k}\\
                        & \leq  & \ds \sum_{k=0}^pC_p^k\left(r^{2k}\nu_T(r)-\epsilon^{2k}\nu_T(\epsilon)\right)\\
                        & \leq & \ds \nu_T(r)-\nu_T(\epsilon)+\sum_{k=1}^pC_p^k\left(r^{2k}\nu_T(r)-\epsilon^{2k}\nu_T(\epsilon)\right).
                    \end{array}$$
                which is bounded independently to $\epsilon$. Inequality (\ref{eq3.1}) is obtained by tending $\epsilon$ to 0.
            \item \emph{Second case: $T\in\mathscr P_p^-(\C^n)$.} For every $0\leq k< p$, if we set
                $$\begin{array}{lcl}
                    \Lambda_k(r)&:=&\ds \nu_{T\w\beta^k}(r)+\int_0^r\left(\frac{t^{2(p-k)}}{r^{2(p-k)}}-1 \right)\frac{\nu_{dd^cT\w\beta^k}(t)}t dt\\
                    & = &\ds r^{2k}\nu_T(r)+ \int_0^r\left(\frac{t^{2(p-k)}}{r^{2(p-k)}}-1 \right)t^{2k}\frac{\nu_{dd^cT}(t)}t dt\\
                    & =: & r^{2k}\nu_T(r)+ \mathscr J_k(r)
                  \end{array}$$
                then $\Lambda_k$ is a non-negative increasing function and $\nu_T(0)=\lim_{r\to0}\Lambda_0(r)$. Therefore, as in the previous case, one has
                $$\begin{array}{lcl}
                        \ds||\mathcal T||(\pi^{-1}(B(\epsilon,r)))& = &\ds r^{2p}\nu_T(r)-\epsilon^{2p}\nu_T(\epsilon) +\sum_{k=0}^{p-1}C_p^k(\Lambda_k(r)-\Lambda_k(\epsilon))\\
                        & =  & \ds \sum_{k=0}^pC_p^k\left[(r^{2k}\nu_T(r)-\epsilon^{2k}\nu_T(\epsilon)) +(\mathscr J_k(r)-\mathscr J_k(\epsilon))\right].
                    \end{array}$$
                If we tend $\epsilon$ to 0 and using the fact that  $\nu_{dd^cT}$ is a non-positive decreasing function, we obtain
                $$\begin{array}{lcl}
                        \ds||\widehat{T}||(\pi^{-1}(B(r)))& = &\ds||\mathcal T||(\pi^{-1}(B(r)\smallsetminus \{0\}))\\
                        & \leq &\ds |\nu_T(r)-\nu_T(0)|+ \sum_{k=1}^pC_p^k r^{2k}\nu_T(r) +\sum_{k=0}^{p-1}C_p^k\mathscr J_k(r)\\
                        & \leq &\ds |\nu_T(r)-\nu_T(0)|+ \sum_{k=1}^pC_p^k r^{2k}\nu_T(r)\\
                        & & \ds-\sum_{k=1}^{p-1}C_p^k\frac{r^{2k}}{2k}\nu_{dd^cT}(r)+\mathscr J_0(r).
                \end{array}$$
                which completes the proof.
          \end{itemize}
    \end{proof}
    \begin{defn}
        We say that a positive current $S$ satisfies the condition of restriction along an hypersurface $Y$ if for any equation $\{h=0\}$ of $Y$ in a local chart $U$, one has $\log|h|\in L^1(U,\sigma_S)$ where $\sigma_S=S\w\beta^p$ the trace measure associated to $S$.
    \end{defn}
    The problem now is to give a suitable condition on $T$ to have $\widehat{T}$ satisfies the condition of restriction along the hypersurface $\mathbb P^{n-1}$. For this aim, we need the following lemma where we set $$\Delta:=\left\{z\in\C^n;\ |z_j|<1,\ \forall\; j\in\{1,...,n\}\right\}$$ the unit polydisc of $\C^n$ and $\Delta^*=\Delta\smallsetminus \{0\}$.
    \begin{lem}\label{lem5}
        Let $S$ be a positive current of bidimension (p,p) on a neighborhood $\Omega$ of $\Delta$ in  $\C^n$. Then the following conditions are equivalent:
        \begin{enumerate}
          \item $\log|z_k|\in L^1(\Delta^*_k(1),\sigma_S)$.
          \item $\ds\int_0^1 \frac{\sigma_S(\Delta^*_k(r))}rdr<+\infty$, where $\Delta^*_k(r)=\{z\in\Delta;\ 0<|z_k|<r\}$ for all $1\leq k\leq n$.
        \end{enumerate}
    \end{lem}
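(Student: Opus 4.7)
The plan is to reduce the equivalence to a single application of the Tonelli theorem on the product of $\Delta^*_k(1)$ and $(0,1)$ with the positive integrand $1/r$. The key observation is that for every $z\in\Delta^*_k(1)$ one has $0<|z_k|<1$, hence
$$-\log|z_k|=\int_{|z_k|}^1\frac{dr}{r}=\int_0^1\mathbf{1}_{\{|z_k|<r\}}\,\frac{dr}{r}.$$
In particular $-\log|z_k|\geq 0$ on $\Delta^*_k(1)$, so condition $(1)$ is equivalent to the finiteness of $\int_{\Delta^*_k(1)}(-\log|z_k|)\,d\sigma_S$.

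Next, since $\sigma_S=S\wedge\beta^p$ is a positive Borel measure and the integrand above is non-negative, I will apply the Tonelli theorem to interchange the order of integration:
$$\int_{\Delta^*_k(1)}(-\log|z_k|)\,d\sigma_S(z)=\int_0^1\frac{1}{r}\left(\int_{\Delta^*_k(1)}\mathbf{1}_{\{|z_k|<r\}}\,d\sigma_S(z)\right)dr.$$
The inner integral is exactly $\sigma_S(\Delta^*_k(r))$ by the definition of $\Delta^*_k(r)=\{z\in\Delta\,;\ 0<|z_k|<r\}$, so the right-hand side becomes
$$\int_0^1\frac{\sigma_S(\Delta^*_k(r))}{r}\,dr,$$
which is condition $(2)$. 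Because both quantities are equal (and not merely bounded by each other), finiteness of either is finiteness of the other, proving the equivalence $(1)\Longleftrightarrow(2)$.

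There is really no obstacle here: the only minor point to check is the measurability of $(z,r)\mapsto\mathbf{1}_{\{|z_k|<r\}}$ on $\Delta^*_k(1)\times(0,1)$ (which is immediate as the indicator of an open set) and the finiteness of $\sigma_S$ on the compact subsets involved, which follows from $S$ being a well-defined positive current on a neighbourhood of $\overline{\Delta}$. Hence the argument is a clean Tonelli-type identity, and no further hypothesis on $S$ (plurisubharmonicity, closedness, etc.) is needed.
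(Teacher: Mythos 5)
Your proof is correct and follows essentially the same route as the paper: both establish the exact identity $\int_0^1 \sigma_S(\Delta^*_k(r))\,r^{-1}dr=\int_{\Delta^*}-\log|z_k|\,d\sigma_S$ by interchanging the order of integration with respect to the positive measure $\sigma_S\otimes \frac{dr}{r}$. The only difference is cosmetic: the paper truncates the $r$-integral at $u>0$, applies Fubini on the truncated domain, and passes to the limit by monotonicity, whereas you invoke Tonelli directly on the non-negative integrand, which handles the possible infiniteness automatically.
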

    This lemma was proved by Raby \cite{Ra} for a positive closed current. We give the same proof (In fact, only the positivity of the current is needed).
    \begin{proof}
        Equivalence between the two conditions is deduced from the following equality:
        \begin{equation}\label{eq3.3}
        \int_0^1 \frac{\sigma_S(\Delta^*_k(r))}rdr=\int_{\Delta^*}-\log|z_k|d\sigma_S.
        \end{equation}
        to show (\ref{eq3.3}), Let $u\in]0,1]$. Then
        $$\int_u^1 \frac{\sigma_S(\Delta^*_k(r))}rdr=\int_u^1\left(\frac1r\int_{\Delta^*_k(r)}d\sigma_S\right)dr =\int_{D_k(u)}d\sigma_S\otimes \frac{dr}r$$
        where $D_k(u)=\Delta^*_k(r)\times]u,1[$.
        Therefore,
        $$\begin{array}{lcl}
            \ds\int_u^1 \frac{\sigma_S(\Delta^*_k(r))}rdr & = & \ds\int_{\Delta^*}\left(\int_{\max(u,|z_k|)}^1\frac{dr}r\right)d\sigma_S \\
            & = &\ds  \int_{\Delta^*}-\log(\max(u,|z_k|))d\sigma_S
          \end{array}$$
        hence,
        $$\int_{\Delta^*\smallsetminus \Delta^*_k(u)}-\log|z_k|d\sigma_S\leq \int_u^1 \frac{\sigma_S (\Delta^*_k(r))}rdr\leq \int_{\Delta^*}-\log|z_k|d\sigma_S.$$
        If we tend $u$ to 0, we obtain equality (\ref{eq3.3}).
    \end{proof}
    \begin{prop}\label{pro2}
        Let $T\in\mathscr P^+_p(\C^n)$ (resp. $T\in\mathscr P^-_p(\C^n)$) such that
        $$\int_0^1\frac{\nu_T(r)- \nu_T(0)}rdr<+\infty$$
        resp.
        $$\int_0^1\frac{|\nu_T(r)- \nu_T(0)|}rdr<+\infty \hbox{ and } \int_0^1\frac{\nu_{dd^cT(r)}}r \log rdr<+\infty.$$
        Then $\widehat{T}$ satisfies the condition of restriction along $\mathbb P^{n-1}$.
    \end{prop}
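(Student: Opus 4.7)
The plan is to localize the problem to each standard affine chart of the blow-up $\mathbb{C}^n[0]$ and then combine the mass estimate of Proposition \ref{pro1} with Lemma \ref{lem5}. In the chart where the homogeneous coordinate $\xi_k$ does not vanish, one uses coordinates $(\xi_1,\ldots,\widehat{\xi}_k,\ldots,\xi_n,z_k)$ in which $\pi$ acts by $z_j=\xi_j z_k$ for $j\neq k$ (and $\xi_k=1$), so that $\mathbb{P}^{n-1}$ is locally cut out by the single equation $\{z_k=0\}$. Since the condition of restriction is invariant under multiplication of $h$ by a non-vanishing holomorphic factor, it suffices to show that $\log|z_k|\in L^1(\Delta,\sigma_{\widehat{T}})$ in each such chart; by Lemma \ref{lem5} this reduces further to verifying
\begin{equation*}
\int_0^1\frac{\sigma_{\widehat{T}}(\Delta^*_k(r))}{r}\,dr<+\infty.
\end{equation*}

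The geometric key is that on $\Delta^*_k(r)$ one has $|z_k|<r$ and $|\xi_j|<1$ for $j\neq k$, so $|\pi(w)|^2=|z_k|^2(1+\sum_{j\neq k}|\xi_j|^2)<nr^2$. Since $\pi$ is a biholomorphism away from $\mathbb{P}^{n-1}$ and the chart form $\beta$ is comparable on the compact set $\overline{\Delta}$ to the global K\"ahler form $\omega$ of $\mathbb{C}^n[0]$, there is a constant $C>0$ with
\begin{equation*}
\sigma_{\widehat{T}}(\Delta^*_k(r))\leq C\,\|\widehat{T}\|(\pi^{-1}(B(r\sqrt{n}))),
\end{equation*}
and the right-hand side is controlled by Proposition \ref{pro1} in terms of $\nu_T$ and $\nu_{dd^cT}$ evaluated at $r\sqrt{n}$.

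In the plurisubharmonic case the two summands on the right of (\ref{eq3.1}) give, after the substitution $s=r\sqrt{n}$, the hypothesis $\int_0^1\frac{\nu_T(s)-\nu_T(0)}{s}ds<+\infty$ and the trivially convergent term $\sum_{k=1}^p C_p^k\int_0^1 s^{2k-1}\nu_T(s)\,ds$ (since $\nu_T$ is bounded on $]0,1]$). In the plurisuperharmonic case the analogous estimate takes care of the first three terms of (\ref{eq3.2}), using that $\nu_{dd^cT}$ is bounded on $]0,1]$ (its limit at $0$ is minus the Lelong number of the positive closed current $-dd^cT$). The main obstacle is the nested term $\int_0^r(\frac{t^{2p}}{r^{2p}}-1)\frac{\nu_{dd^cT}(t)}{t}\,dt$: to handle it I would apply Fubini to
\begin{equation*}
\int_0^1\frac{dr}{r}\int_0^r\left(\frac{t^{2p}}{r^{2p}}-1\right)\frac{\nu_{dd^cT}(t)}{t}\,dt=\int_0^1\frac{\nu_{dd^cT}(t)}{t}\left(\int_t^1\frac{1}{r}\left(\frac{t^{2p}}{r^{2p}}-1\right)dr\right)dt,
\end{equation*}
evaluate the inner integral explicitly, and read off that its singular behavior as $t\to 0^+$ is $\log t$. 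The outer integral is then dominated by a finite multiple of $\int_0^1\frac{\nu_{dd^cT}(t)}{t}\log t\,dt$, which is finite precisely by the second assumption of the hypothesis. This closes the proof.
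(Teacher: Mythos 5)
Your proof is correct and follows essentially the same route as the paper: reduce via Lemma \ref{lem5} to the finiteness of $\int_0^1 r^{-1}\,\sigma_{\widehat{T}}(\Delta_k^*(r))\,dr$, bound that mass by Proposition \ref{pro1}, and handle the nested term $\mathscr J_0(r)$ by Fubini, which produces exactly the weight $\log t-\frac{t^{2p}}{2p}+\frac1{2p}$ controlled by the second hypothesis together with condition $(C)_0$. The only difference is that you spell out the chart localization and the comparison $\sigma_{\widehat{T}}(\Delta_k^*(r))\leq C\|\widehat{T}\|(\pi^{-1}(B(r\sqrt n)))$, which the paper leaves implicit.
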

    This result is due to Giret \cite{Gi} in the case of positive closed currents.
    \begin{proof}$ $
        \begin{itemize}
          \item \emph{First case $T\in\mathscr P_p^+(\C^n)$.}  Thanks to Inequality (\ref{eq3.1}), one has
            $$||\widehat{T}||(\pi^{-1}(B(r))\leq  \nu_T(r)-\nu_T(0)+C_r\nu_T(r)$$
            where $C_r=\sum_{k=1}^pC^k_pr^{2k}$. Thanks to Lemma \ref{lem5}, $\widehat{T}$ satisfies the condition of restriction along $\mathbb P^{n-1}$ if $$\int_0^1\frac{\nu_T(r)- \nu_T(0)}rdr<+\infty.$$
          \item \emph{Second case $T\in\mathscr P_p^-(\C^n)$.} As in the previous case, thanks to Inequality (\ref{eq3.2}), we have
            $$||\widehat{T}||(\pi^{-1}(B(r)) \leq |\nu_T(r)-\nu_T(0)|+C_r\nu_T(r)-C'_r\nu_{dd^cT}(r)+\mathscr J_0(r)$$
            where $C'_r=\sum_{k=1}^p\frac{C^k_p}{2k} r^{2k}$. Thanks to Lemma \ref{lem5}, $\widehat{T}$ satisfies the condition of restriction along $\mathbb P^{n-1}$ if $$\int_0^1\frac{|\nu_T(r)- \nu_T(0)|}rdr<+\infty\quad \hbox{and }\quad \int_0^1\mathscr J_0(r)dr<+\infty.$$
            A simple computation shows that
            $$\begin{array}{lcl}
                \ds\int_0^1 \mathscr J_0(r)dr& = &\ds \int_0^1\frac1r\left(\int_0^r\left(\frac{t^{2p}}{r^{2p}}-1\right) \frac{\nu_{dd^cT}(t)}tdt\right)dr \\
                & = &\ds \int_0^1\frac{\nu_{dd^cT}(t)}t\left(\log t-\frac{t^{2p}}{2p}+\frac1{2p}\right)dt.
              \end{array}$$
        \end{itemize}
    \end{proof}
    As an application of proposition \ref{pro2}, we give a second proof of the Main result.
    \begin{cor}\label{cor1}
        Let $T$ be a positive plurisubharmonic or plurisuperharmonic current as in proposition \ref{pro2}. Then $T$ admits a tangent cone at 0.
    \end{cor}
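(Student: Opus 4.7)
The plan is to derive Corollary \ref{cor1} from Proposition \ref{pro2} by lifting the tangent-cone problem to the blow-up $\C^n[0]$, on which we have at our disposal the positive current $\widehat{T}$ with $\pi_\star\widehat{T}=T$. The idea, already used by Giret in the closed case, is that the complex dilatation $h_a$ lifts canonically to a biholomorphism $\widehat{h}_a:\C^n[0]\to\C^n[0]$ given by $\widehat{h}_a(z,L)=(az,L)$. This lift satisfies $\pi\circ\widehat{h}_a=h_a\circ\pi$ and degenerates, as $a\to 0$, into the retraction $\rho:(z,L)\mapsto(0,L)$ onto the exceptional divisor $\mathbb{P}^{n-1}$. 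The hypotheses of Proposition \ref{pro2} ensure that $\widehat{T}$ satisfies the condition of restriction along $\mathbb{P}^{n-1}$, so that a well-defined restriction $\widehat{T}_{|\mathbb{P}^{n-1}}$ exists as a positive current on the exceptional divisor.

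First, I would use the identity $h_a^\star T=\pi_\star(\widehat{h}_a^\star\widehat{T})$ on $\C^n\smallsetminus\{0\}$, which follows from the commutation $\pi\circ\widehat{h}_a=h_a\circ\pi$ and from the fact that $\widehat{h}_a$ is a biholomorphism off $\pi^{-1}(\{0\})$. Proposition \ref{pro1} applied to $\widehat{h}_a^\star\widehat{T}$, together with the assumptions on $\nu_T$ and $\nu_{dd^cT}$, yields locally uniform mass bounds for the family $(h_a^\star T)_a$ in a neighborhood of $0$, so the existence of the tangent cone reduces to the existence of a unique weak limit of $\widehat{h}_a^\star\widehat{T}$ on $\C^n[0]$.

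To produce this limit I would split the argument into two regions. On compact sets $K\subset \C^n[0]\smallsetminus\mathbb{P}^{n-1}$ the maps $\widehat{h}_a$ converge smoothly to $\rho$, and since $\rho$ takes values in $\mathbb{P}^{n-1}$, where the restriction of $\widehat{T}$ is defined thanks to Proposition \ref{pro2}, one checks that $\widehat{h}_a^\star\widehat{T}$ converges weakly on $K$ to $\rho^\star(\widehat{T}_{|\mathbb{P}^{n-1}})$. Pushing forward by $\pi$ then identifies the candidate tangent cone as $\pi_\star\rho^\star(\widehat{T}_{|\mathbb{P}^{n-1}})$, which is conic with vertex at $0$ as desired.

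The main obstacle, and the place where Proposition \ref{pro2} is essential, is the control of the behavior in a tubular neighborhood $V$ of $\mathbb{P}^{n-1}$ and the need to guarantee that no mass escapes to or concentrates on $\mathbb{P}^{n-1}$ along the family. The restriction condition from Proposition \ref{pro2} translates via Lemma \ref{lem5} into an integrability statement for the trace measure of $\widehat{T}$ against the distance to $\mathbb{P}^{n-1}$, which is precisely what is needed to guarantee continuity of the slices of $\widehat{h}_a^\star\widehat{T}$ by the level hypersurfaces of $|z|$ as $a\to 0$. Once this uniform control is in place, the weak limit of $\widehat{h}_a^\star\widehat{T}$ exists on the whole of $\C^n[0]$, and the corollary follows by applying $\pi_\star$.
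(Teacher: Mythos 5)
Your proposal is correct and follows essentially the same route as the paper: both use Proposition \ref{pro2} to make sense of the restriction $\widehat{T}_{|\mathbb P^{n-1}}$, identify the tangent cone as its pull-back under the radial projection (your $\pi_\star\rho^\star(\widehat{T}_{|\mathbb P^{n-1}})$ coincides off the origin with the paper's $\mu^\star(\widehat{T}_{|\mathbb P^{n-1}})$ since $\mu=\mathrm{pr}_{\mathbb P^{n-1}}\circ\pi^{-1}$), and defer the actual convergence $h_a^\star T\to\Theta_T$ to Giret's argument. Your formulation upstairs on $\C^n[0]$ via the lifted dilatations $\widehat{h}_a$ is only a cosmetic variant of the paper's formulation downstairs via $\mu$.
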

    \begin{proof}
        Let $\mu:\C^n\smallsetminus\{0\}\to\mathbb P^{n-1}$ defined by $\mu(z)=[z]$. Thanks to proposition \ref{pro2}, the current $\mu^\star\left(\widehat{T}_{|\mathbb P^{n-1}}\right)$ is positive on $\C^n\smallsetminus\{0\}$ and it admits a trivial extension $\Theta_T$ on $\C^n$. We prove that $\Theta_T=\lim_{a\to0}h_a^\star T$ (see \cite{Gi}) so $\Theta_T$ is the tangent cone to $T$ at 0.
    \end{proof}

\section{Appendix: Conic currents}
    Let $T$ be a positive plurisubharmonic or plurisuperharmonic current of bidimension $(p,p)$ on $\C^n$. recall that $T$ is called \emph{conic} if $h_a^\star T=T$ for every $a\in\C^*$. It is well known that $dd^c(h_a^\star T)=h_a^\star (dd^cT)$ so if  $T$ is conic then $dd^cT$ is also conic and the two functions $\nu_T$ and $\nu_{dd^cT}$ are constant. In particular, if $T\in\mathscr P_p^\pm(\C^n)$ then $\nu_{dd^cT}\equiv \nu_{dd^cT}(0)=0$ so $T$ is pluriharmonic. The following lemma gives more informations.
    \begin{lem}\label{lem6}
        Let $T\in\mathscr P_p^\pm(\C^n)$.
        The following assertions are equivalent:
        \begin{enumerate}
          \item $T$ is invariant by dilatations $h_a$ for all $a\in\C^*$;
          \item $T$ is invariant by dilatations $h_a$ for all $a\in]0,+\infty[$;
          \item $T$ is pluriharmonic and $T\w\alpha^p=0$ on $\C^n\smallsetminus\{0\}$;
          \item $T$ is the extension to $\C^n$ of the pull-back of a positive current by the projection $\mu:\C^n\smallsetminus\{0\}\to\mathbb P^{n-1}$.
        \end{enumerate}
    \end{lem}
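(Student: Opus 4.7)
Here is my plan. The strategy is the cyclic chain $(1)\Rightarrow(2)\Rightarrow(3)\Rightarrow(4)\Rightarrow(1)$. The extreme implications are essentially formal: $(1)\Rightarrow(2)$ is the inclusion $]0,+\infty[\subset\C^*$, while $(4)\Rightarrow(1)$ follows from the identity $\mu\circ h_a=\mu$ valid for every $a\in\C^*$ (since $[az]=[z]$), which gives $h_a^\star T=h_a^\star\mu^\star R=(\mu\circ h_a)^\star R=\mu^\star R=T$ on $\C^n\smallsetminus\{0\}$; the trivial extension across the origin is then automatically preserved by $h_a$.

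For $(2)\Rightarrow(3)$ I would invoke the observation recalled in the paragraph preceding the lemma: real positive dilation invariance yields $\nu_T(r)=\nu_{h_a^\star T}(r)=\nu_T(ar)$ and similarly for $\nu_{dd^cT}$, so both Lelong functions are constant on $]0,+\infty[$. Combined with the membership $T\in\mathscr P_p^\pm(\C^n)$ (which in the plurisuperharmonic case is where condition $(C)_0$ plays its role, forcing the constant value of $\nu_{dd^cT}$ to be zero), we obtain $\nu_{dd^cT}\equiv 0$; the integral $\int_{B(r)}dd^cT\w\beta^{p-1}$ thus vanishes for every $r$, and since $dd^cT\w\beta^{p-1}$ has a fixed sign we conclude $dd^cT=0$, i.e., $T$ is pluriharmonic. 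Lemma \ref{lem1} then collapses between any $0<r_1<r_2$ to $0=\nu_T(r_2)-\nu_T(r_1)=\int_{B(r_1,r_2)} T\w\alpha^p$, and the positivity of the measure $T\w\alpha^p$ forces its vanishing on every annulus, hence on all of $\C^n\smallsetminus\{0\}$.

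The heart of the argument is $(3)\Rightarrow(4)$. I would work locally in the projective chart $\{z_n\neq 0\}$ used in Section 2, where $\mu$ becomes the projection $(w',w_n)\mapsto w'$ and $\alpha=dd^c\log(1+|w'|^2)$ does not involve $w_n$. Writing $T=2^{-q}i^{q^2}\sum T_{I,J}\,dw_I\w d\bar w_J$, a bidegree count shows that the volume form $T\w\alpha^p$ only receives contributions from multi-indices with $n\in I$ and $n\in J$; after diagonalizing $\alpha$ at a point, the positivity of $T$ combined with $T\w\alpha^p=0$ forces $T_{I,I}=0$ for every $I\ni n$. Applying Demailly's inequality (\ref{eq2.6}) with weights $\lambda_i=1$ for $i<n$ and letting $\lambda_n\to+\infty$ propagates this vanishing to $T_{I,J}=0$ as soon as $n\in I$ or $n\in J$. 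Substituting into the coefficient formula for $dd^cT$ recalled in the proof of Theorem \ref{th1}, every cross term involving a now-vanishing coefficient drops out, and the pluriharmonicity reduces to $\partial^2 T_{I,J}/\partial w_n\partial\bar w_n=0$ for the surviving (horizontal) coefficients. A Liouville-type argument along the fiber $\C^*$, exploiting positivity of the diagonal $T_{I,I}$ and the coherence of $T$ as a current defined on all of $\C^n$ (through the overlap with another projective chart, and controlled in mass near $0$ by Proposition \ref{pro1}), rules out any additive $\log|w_n|^2$ contribution and forces each $T_{I,J}$ to be independent of $w_n$. Hence $T|_{\text{chart}}=\mu^\star R$ for a positive current $R$ on the image chart; gluing over the standard atlas of $\mathbb P^{n-1}$ produces the desired global $R$, and the trivial extension across $\{0\}$ is unambiguous by the mass bounds (\ref{eq3.1})--(\ref{eq3.2}).

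The main obstacle is precisely this Liouville step inside $(3)\Rightarrow(4)$: passing from the fiberwise Laplace equation $\partial^2 T_{I,J}/\partial w_n\partial\bar w_n=0$ to genuine $w_n$-independence requires excluding nontrivial positive harmonic contributions along fibers, and this is where the joint use of positivity of the diagonal coefficients and the global existence of $T$ on $\C^n$ (forcing consistency across the projective charts) becomes essential.
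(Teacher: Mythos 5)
Your cyclic scheme is the same as the paper's, and the implications $(1)\Rightarrow(2)$, $(2)\Rightarrow(3)$ and $(4)\Rightarrow(1)$ are essentially correct: $(2)\Rightarrow(3)$ is exactly the paper's Lelong--Jensen collapse, and your direct argument for $(4)\Rightarrow(1)$ via $\mu\circ h_a=\mu$ is fine (the paper merely cites Haggui there). One small repair in $(2)\Rightarrow(3)$: in the plurisubharmonic case you assert, without justification, that the constant value of $\nu_{dd^cT}$ is zero; the clean route is to apply (\ref{eq1.1}) to $T$ itself with $\nu_T$ constant, so that a sum of three nonnegative terms vanishes, which gives simultaneously $dd^cT\w\beta^{p-1}=0$ (hence $dd^cT=0$) and $T\w\alpha^p=0$ without ever discussing $\nu_{dd^cT}$ separately. (In the plurisuperharmonic case your use of condition $(C)_0$ is correct.)

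The genuine gap is in $(3)\Rightarrow(4)$ --- precisely the implication the paper itself does not prove but delegates to Haggui \cite{Ha}. Your reduction is right as far as it goes: $T\w\alpha^p\geq c\,T\w(dd^c|w'|^2)^p$ kills the diagonal coefficients $T_{I,I}$ with $n\in I$, Demailly's inequality (\ref{eq2.6}) with $\lambda_n\to\infty$ kills the off-diagonal ones, and the coefficient of $dw_{I\cup\{n\}}\w d\overline{w}_{J\cup\{n\}}$ in $dd^cT=0$ then reduces to $\partial^2T_{I,J}/\partial w_n\partial\overline{w}_n=0$. But the ``Liouville step'' you flag is not a deferred technicality: it is the entire content of the implication, and the tools you propose for it are not the right ones. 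Chart coherence is irrelevant (each fiber $\C^*\cdot z$ lies in a single chart), and the mass bounds (\ref{eq3.1})--(\ref{eq3.2}) do not control the behaviour as $w_n\to0$, since $\log|w_n|$ is locally integrable there. What is actually needed is: (i) pair $T_{I,J}$ with a fixed test form in the $w'$ variables to obtain a genuine distribution in $w_n$ alone, annihilated by $\partial^2/\partial w_n\partial\overline{w}_n$, hence (Weyl's lemma, now elliptic) a harmonic function on $\C^*$ --- you cannot invoke hypoellipticity of the operator in all $2n$ variables, because there is none; (ii) for diagonal coefficients this function is nonnegative, and a nonnegative harmonic function on all of $\C^*$ is constant (this uses both ends of the punctured fiber, i.e.\ that $T$ lives on all of $\C^n$); (iii) the off-diagonal coefficients are not positive, so fiberwise harmonicity alone does not force constancy --- one must use the domination $|T_{I,J}|\leq 2^q\sum_M T_{M,M}$ by the now $w_n$-independent diagonal measures to get boundedness along fibers, then remove the singularity at $0$ and apply Liouville. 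As written, the proposal stops exactly where this work begins, so the key implication remains unproved.
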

    \begin{proof}
        It's clear that (1) implies (2). With the hypothesis of (2) we have $dd^cT$ is also invariant by dilatations $h_a$ for all $a\in]0,+\infty[$ so $\nu_T$ and $\nu_{dd^cT}$ are constants. Thanks to Lemma \ref{lem1}, one has $$\int_{B(\epsilon,r)}T\w\alpha^p=0,\quad \forall\;0<\epsilon<r.$$
        (3) implies (4) and (4) implies (1) are proved by Haggui in \cite{Ha}.
    \end{proof}
    \begin{rem}
        The current $T_0$ of Example \ref{exple1} is positive plurisuperharmonic conic non pluriharmonic, so if we study positive plurisuperharmonic current non satisfying condition $(C)_0$ then assertion $(3)$ in Lemma \ref{lem6} may be replaced by         \textit{$(3)'$ $dd^cT$ is conic and
        $$\int_{B(\epsilon,r)}T\w\alpha^p=\nu_{dd^cT}(0)\log\frac\epsilon r,\quad \forall\; 0<\epsilon<r.$$}
    \end{rem}
    \begin{prop}\label{pro3}
        If $T\in\mathscr P_p^+(\C^n)$ (resp. $T\in\mathscr P_p^-(\C^n)$) then every adherence value of $(h_a^\star T)_a$ is a positive  conic pluriharmonic current on $\C^n$.
    \end{prop}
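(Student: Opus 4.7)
The plan is to fix a sequence $a_k\to 0$ such that $h_{a_k}^\star T$ converges weakly to $\Theta$ on $\C^n$---existence being guaranteed by the uniform mass bounds (\ref{eq2.1})/(\ref{eq2.2})---and to verify positivity, pluriharmonicity, and conicity of $\Theta$ in that order. Positivity is immediate since each $h_{a_k}^\star T$ is positive and the weak limit of positive currents is positive.

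For pluriharmonicity, the key preliminary observation is that $\nu_{dd^cT}(0)=0$ in both cases. When $T\in\mathscr P_p^-(\C^n)$, $\nu_{dd^cT}$ is non-positive and tends monotonically to $\nu_{dd^cT}(0)\leq 0$ as $t\to 0^+$, so Condition $(C)_0$ forces $\nu_{dd^cT}(0)=0$. When $T\in\mathscr P_p^+(\C^n)$, letting $r_1\to 0$ in Lemma \ref{lem1} applied to $T$, discarding the non-negative terms $\int_{B(r_1,r_2)}T\w\alpha^p$ and $(r_1^{-2p}-r_2^{-2p})\int_0^{r_1}t\,\sigma_{dd^cT}(B(t))\,dt$, and restricting the remaining integral to $t\leq r/2$ yields
\begin{equation*}
\nu_T(r)-\nu_T(0)\;\geq\;(1-2^{-2p})\int_0^{r/2}\frac{\nu_{dd^cT}(t)}t\,dt;
\end{equation*}
finiteness of the left-hand side combined with $\nu_{dd^cT}(t)\geq\nu_{dd^cT}(0)\geq 0$ forces $\nu_{dd^cT}(0)=0$. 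Consequently, the total variation mass of $dd^c(h_a^\star T)$ on any fixed ball $B(r)$ equals $r^{2(p-1)}|\nu_{dd^cT}(|a|r)|$ and tends to $0$ as $a\to 0$. Since $dd^c(h_a^\star T)$ has constant sign, this forces $dd^c(h_{a_k}^\star T)\to 0$ weakly, and continuity of $dd^c$ gives $dd^c\Theta=0$.

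For conicity I apply Lemma \ref{lem6}: being pluriharmonic, $\Theta\in\mathscr P_p^+(\C^n)\cap\mathscr P_p^-(\C^n)$, so it suffices to verify criterion $(3)$. By the Portmanteau theorem applied to the trace measures, $\sigma_\Theta(B(r))=\lim_k r^{2p}\nu_T(|a_k|r)=r^{2p}\nu_T(0)$ for all but countably many $r$; together with the monotonicity of $\nu_\Theta$ coming from Lemma \ref{lem1} applied to $\Theta$ (with $dd^c\Theta=0$), this shows $\nu_\Theta\equiv \nu_T(0)$. The same Lelong--Jensen identity then yields $\int_{B(r_1,r_2)}\Theta\w\alpha^p=\nu_\Theta(r_2)-\nu_\Theta(r_1)=0$ for every $0<r_1<r_2$, so the non-negative measure $\Theta\w\alpha^p$ vanishes on $\C^n\smallsetminus\{0\}$, and Lemma \ref{lem6}, $(3)\Rightarrow(1)$, concludes that $\Theta$ is conic. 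The main obstacle is the preliminary identity $\nu_{dd^cT}(0)=0$, which is not recorded explicitly in the background material and must be extracted from the Lelong--Jensen formula as above; once it is in hand, the rest is essentially bookkeeping of mass comparisons and of Lemma \ref{lem1}.
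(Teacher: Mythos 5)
Your proof is correct, but it is organized differently from the paper's. The paper never isolates the fact $\nu_{dd^cT}(0)=0$; instead, in the plurisubharmonic case it passes to the limit in the Lelong--Jensen identity written for $h_{a_k}^\star T$ on the annulus $B(\epsilon,r)$: the left-hand side $\nu_T(|a_k|r)-\nu_T(|a_k|\epsilon)$ tends to $0$ because $\nu_T(0)$ exists, while the right-hand side converges to a sum of three non-negative terms for $\Theta$, so all three vanish at once --- giving $dd^c\Theta=0$ and $\Theta\w\alpha^p=0$ simultaneously. In the plurisuperharmonic case the paper runs the same scheme with the corrected function $\Lambda_T$ and then combines $\Lambda_\Theta\equiv\nu_\Theta(0)$ with $\nu_\Theta\equiv\nu_\Theta(0)$ to kill $\nu_{dd^c\Theta}$. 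You instead front-load the observation $\nu_{dd^cT}(0)=0$ (immediate from Condition $(C)_0$ in the plurisuperharmonic case, and correctly extracted from Lelong--Jensen in the plurisubharmonic case), deduce $dd^c\Theta=0$ directly from the decay of the mass $r^{2(p-1)}|\nu_{dd^cT}(|a|r)|$, and only then apply Lelong--Jensen to $\Theta$ itself, together with the Portmanteau identification $\nu_\Theta\equiv\nu_T(0)$, to get $\Theta\w\alpha^p=0$. What your route buys is a uniform treatment of the two cases after the preliminary step and an explicit record of $\nu_{dd^cT}(0)=0$, a fact the paper invokes only implicitly in its discussion of conic currents just before Lemma \ref{lem6}; what the paper's route buys is that it never needs that preliminary fact in the plurisubharmonic case, since the vanishing of $\nu_{dd^c\Theta}$ falls out of the limit identity by positivity alone. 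Both arguments share the same mild measure-theoretic caveat (convergence of $\nu_{h_{a_k}^\star T}(r)$ to $\nu_\Theta(r)$ only for $r$ outside a countable set), which you handle explicitly and the paper glosses over, and both conclude with Lemma \ref{lem6}, $(3)\Rightarrow(1)$.
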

    \begin{proof}
        Let $\Theta=\lim_{k\to+\infty}h_{a_k}^\star T$ where $a_k\underset{k\to+\infty}\longrightarrow0$.
        \begin{itemize}
          \item \emph{First case $T\in\mathscr P_p^+(\C^n)$.}
            Using $h_{a_k}^\star T$ instead of $T$, the Lelong-Jensen formula gives, for every $0<\epsilon<r$ and $k$,
            $$\begin{array}{lcl}
                \nu_T(|a_k|r)-\nu_T(|a_k|\epsilon)& = &\ds \int_\epsilon^r\left(\frac1{t^{2p}} -\frac1{r^{2p}} \right)t^{2p-1}\nu_{dd^c(h_{a_k}^\star T)}(t)dt\\
                &  & \ds +\left(\frac1{\epsilon^{2p}}-\frac1{r^{2p}}\right) \int_0^\epsilon t^{2p-1}\nu_{dd^c(h_{a_k}^\star T)}(t)dt\\
                & &\hfill  \ds+\int_{B(\epsilon,r)}h_{a_k}^\star T\w\alpha^p.
            \end{array}$$
            If $k\to+\infty$, we obtain
            $$\begin{array}{lcl}
                0 & = &\ds \int_{B(\epsilon,r)}\Theta\w\alpha^p+\int_\epsilon^r\left(\frac1{t^{2p}} -\frac1{r^{2p}} \right)t^{2p-1}\nu_{dd^c\Theta}(t)dt\\
                & & \hfill  \ds +\left(\frac1{\epsilon^{2p}}-\frac1{r^{2p}}\right) \int_0^\epsilon t^{2p-1}\nu_{dd^c\Theta}(t)dt.
              \end{array}$$
            $\Theta$ is positive plurisubharmonic, hence the three terms of the previous equality are equal to zero. In particular $\Theta$ is pluriharmonic and $\Theta\w\alpha^p=0$ on $\C^n\smallsetminus\{0\}$. Thanks to Lemma \ref{lem6}, $\Theta$ is conic.
          \item \emph{Second case $T\in\mathscr P_p^-(\C^n)$.}
            Like in the previous case, we consider the non-negative increasing function  $$\Lambda_T(r)=\nu_T(r)+\int_0^r\left(\frac{t^{2p}}{r^{2p}}-1\right)\frac{\nu_{dd^cT}(t)}{t}dt.$$
            We remark that $h_a^\star T\in\mathscr P_p^-(\C^n)$ because
            $$\int_0^{r_0}\frac{\nu_{dd^c(h_a^\star T)}(t)}{t}dt = \int_0^{|a|r_0} \frac{\nu_{dd^cT}(t)}{t}dt>-\infty,\quad \forall\; a\in\C^*$$
            and
            $$\begin{array}{lcl}
                \Lambda_{h_a^\star T}(r)  & = &\ds \nu_T(|a|r)+\int_0^r\left(\frac{t^{2p}}{r^{2p}}-1\right) \frac{\nu_{(h_a^\star dd^cT)}(t)}{t}dt \\
                & = &\ds \nu_T(|a|r)+\int_0^r\left(\frac{t^{2p}}{r^{2p}}-1\right) \frac{\nu_{dd^cT}(|a|t)}{t}dt \\
                & = & \Lambda_T(|a|r).
              \end{array}$$
            Thanks to the proof of theorem \ref{th2}, for every $0<\epsilon<r$ and $k$ (large enough),
            $$\begin{array}{lcl}
                \ds\Lambda_{h_{a_k}^\star T}(r)- \Lambda_{h_{a_k}^\star T}(\epsilon)& = & \ds\Lambda_T(|a_k|r)-\Lambda_T(|a_k|\epsilon)\\
                 & = &\ds \int_{B(\epsilon,r)}h_{a_k}^\star T\w\alpha^p
              \end{array}$$
            If $k\to+\infty$, we obtain $\Lambda_\Theta$ is constant and $\Theta\w\alpha^p=0$ on $\C^n\smallsetminus\{0\}$.\\
            So, $\Lambda_\Theta(r)=\nu_\Theta(0)$ for every $r>0$  and this can be written as
            \begin{equation}\label{eq4.1}
                \nu_\Theta(r)+\int_0^r\left(\frac{t^{2p}}{r^{2p}}-1\right)\frac{\nu_{dd^c\Theta}(t)}{t}dt =\nu_\Theta(0),\quad \forall\; r>0.
            \end{equation}
            Furthermore, one has
            \begin{equation}\label{eq4.2}
                \nu_\Theta(r)=\lim_{k\to+\infty} \nu_{h_{a_k}^\star T}(r)= \lim_{k\to+\infty} \nu_T(|a_k|r)=\nu_\Theta(0),\quad \forall\; r>0.
            \end{equation}
            Equalities (\ref{eq4.1}) and (\ref{eq4.2}) give
            $$\int_0^r\left(\frac{t^{2p}}{r^{2p}}-1\right)\frac{\nu_{dd^c\Theta}(t)}{t}dt =0,\quad \forall\; r>0.$$
            Since $\Theta$ is a positive plurisuperharmonic current, so  $\nu_{dd^c\Theta}$ is non positive, then $\nu_{dd^c\Theta}\equiv0$. Hence $\Theta$ is a positive pluriharmonic current satisfying $\Theta\w\alpha^p=0$ on $\C^n\smallsetminus\{0\}$, thanks to lemma \ref{lem6}, $\Theta$ is conic.
        \end{itemize}
    \end{proof}

    \begin{cor}\label{cor2}
        Let $T\in\mathscr P^\pm(\C^n)$ and $(a_k)_k,\ (b_k)_k$ are two sequences of complex numbers such that $\left|\frac{a_k}{b_k}\right|$ and $\left|\frac{a_k}{b_k}\right|$ are bounded. If $h_{a_k}^\star T$ and $h_{b_k}^\star T$ converge weakly then $h_{a_k}^\star T-h_{b_k}^\star T$ converges weakly to 0.
    \end{cor}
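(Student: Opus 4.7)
Denote the two weak limits by $\Theta_1:=\lim_{k\to+\infty}h_{a_k}^\star T$ and $\Theta_2:=\lim_{k\to+\infty}h_{b_k}^\star T$. By Proposition \ref{pro3} applied to each sequence (note that $a_k,b_k\to 0$ is not actually used in that proof beyond the conic conclusion; what matters is the existence of the weak limit and that we can run Lelong-Jensen along it), both $\Theta_1$ and $\Theta_2$ are positive conic pluriharmonic currents on $\C^n$. The whole point is therefore to show that the two limits coincide.

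Write $c_k:=a_k/b_k$. The assumption that $|a_k/b_k|$ and $|b_k/a_k|$ are bounded means there exist $0<r\leq R<+\infty$ with $r\leq |c_k|\leq R$ for every $k$. Extract a subsequence $(c_{k_j})_j$ converging to some $c\in\C$ with $|c|\in[r,R]$; in particular $c\neq 0$, so $h_c$ is a biholomorphism of $\C^n$. The composition identity $h_{a_k}=h_{b_k}\circ h_{c_k}$ yields
$$h_{a_k}^\star T=h_{c_k}^\star\bigl(h_{b_k}^\star T\bigr).$$

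Now I would pass to the limit along the subsequence $(k_j)$. Since $h_{b_{k_j}}^\star T\to\Theta_2$ weakly and $c_{k_j}\to c\neq 0$, the maps $h_{c_{k_j}}$ converge to $h_c$ smoothly together with their inverses on every compact subset of $\C^n$; the formulas (\ref{eq2.3}) show that pulling back by $h_{c_{k_j}}$ only multiplies each coefficient by a continuous function of $c_{k_j}$ and evaluates it at $h_{c_{k_j}}(w)$, so for any test form $\varphi$ on $\C^n$ the quantity $\langle h_{c_{k_j}}^\star(h_{b_{k_j}}^\star T),\varphi\rangle=\langle h_{b_{k_j}}^\star T,(h_{c_{k_j}}^{-1})^\star\varphi\rangle$ converges to $\langle\Theta_2,(h_{c}^{-1})^\star\varphi\rangle=\langle h_c^\star\Theta_2,\varphi\rangle$. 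Hence $\Theta_1=h_c^\star\Theta_2$.

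By Proposition \ref{pro3}, $\Theta_2$ is conic, so $h_c^\star\Theta_2=\Theta_2$; therefore $\Theta_1=\Theta_2$. Since the limits $\Theta_1,\Theta_2$ do not depend on the extraction chosen for $c_k$, we obtain $h_{a_k}^\star T-h_{b_k}^\star T\rightharpoonup 0$. The only slightly delicate point is the joint continuity in Step~4, i.e.\ that convergence of both the underlying current and the (non-degenerate) pull-back map implies convergence of the pulled-back currents; this is where the boundedness \emph{from below} of $|c_k|$ is essential, because it prevents the limit map $h_c$ from degenerating and hence allows the action of $h_{c_k}^\star$ to be handled by the explicit coefficient-level formula (\ref{eq2.3}).
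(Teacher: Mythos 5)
Your argument is correct and is essentially the paper's own proof: both pass to a convergent subsequence of the bounded ratios $c_k$ (nonzero in the limit by the two-sided bound), transfer the dilatation onto the test form to get $\Theta_1=h_c^\star\Theta_2$, and conclude $\Theta_1=\Theta_2$ from the conic property of $\Theta_2$ supplied by Proposition \ref{pro3}. Your extra remark on the joint continuity of the pairing (limit of currents against a convergent sequence of test forms) is a point the paper glosses over, but it changes nothing in the structure of the argument.
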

    Therefore, the set of adherent values of $(h_a^\star T)_a$ does not change  if we restrict to the case $a\in]0,+\infty[$.
    \begin{proof}
        Let $\Theta_1=\lim_{k\to+\infty}h_{a_k}^\star T$ and $\Theta_2=\lim_{k\to+\infty}h_{b_k}^\star T$  such that $c_k=\frac{b_k}{a_k}\underset{k\to+\infty}\longrightarrow c\in\C^*$ (we extract  subsequences if necessary). For every $\varphi\in\mathscr D_{p,p}(\C^n)$, we have
        $$\langle h_{a_k}^\star T,\varphi\rangle= \langle h_{1/c_k}^\star h_{b_k}^\star T,\varphi\rangle=\langle h_{b_k}^\star T,h_{c_k}^\star\varphi\rangle\underset{k\to+\infty}\longrightarrow \langle \Theta_2,h_c^\star\varphi\rangle$$
        hence, $\langle \Theta_1,\varphi\rangle=\langle h_{1/c}^\star\Theta_2,\varphi\rangle=\langle \Theta_2,\varphi\rangle$, because $\Theta_2$ is conic.
    \end{proof}

\end{document}